\let\C\undefined
\numberwithin{equation}{section}
\newtheorem{proposition}{Proposition}
\newtheorem{theorem}[proposition]{Theorem}
\newtheorem{lemma}[proposition]{Lemma}
\theoremstyle{definition}
\theoremstyle{remark}
\newcommand{\defeq}{\coloneqq}
\newcommand{\Nset}{\mathbb{N}}
\newcommand{\Rset}{\mathbb{R}}
\newcommand{\BMO}{\mathrm{BMO}}
\newcommand{\Deriv}{D}
\newcommand{\dif}{\,\mathrm{d}}
\DeclarePairedDelimiter{\brk}{(}{)}
\DeclarePairedDelimiter{\abs}{\lvert}{\rvert}
\DeclarePairedDelimiter{\norm}{\lVert}{\rVert}
\DeclarePairedDelimiterX{\intvc}[2]{[}{]}{#1,#2}
\DeclarePairedDelimiterX{\intvl}[2]{(}{]}{#1,#2}
\DeclarePairedDelimiterX{\intvr}[2]{[}{)}{#1,#2}
\DeclarePairedDelimiterX{\intvo}[2]{(}{)}{#1,#2}
\providecommand{\st}{\,\vert\,}
\newcommand\stSymbol[1][]{%
\nonscript\;#1\vert
\allowbreak
\nonscript\;
\mathopen{}}
\DeclarePairedDelimiterX\set[1]\{\}{%
\renewcommand\st{\stSymbol[\delimsize]}
#1
}
\DeclareMathOperator{\supp}{supp}
\DeclareMathOperator{\diam}{diam}
\newcommand{\maxfun}{\mathcal{M}}
\author{Jean Van Schaftingen}
\title[Fractional Gagliardo--Nirenberg and BMO]{
Fractional Gagliardo-Nirenberg interpolation inequality and bounded mean oscillation
}
\date{January 17, 2023}
\address{
Universit\'e catholique de Louvain, Institut de Recherche en Math\'ematique et Physique, Chemin du Cyclotron 2 bte L7.01.01, 1348 Louvain-la-Neuve, Belgium}
\email{Jean.VanSchaftingen@UCLouvain.be}
\keywords{Gagliardo--Nirenberg interpolation inequality, bounded mean oscillation, homogeneous fractional Sobolev--Slobodeckiĭ space, homogeneous first-order Sobolev space, maximal function, Gagliardo semi-norm}
\subjclass[2020]{26D10 (35A23, 42B35, 46B70, 46E35)}
\begin{document}

\begin{abstract}
We prove Gagliardo--Nirenberg interpolation inequalities estimating the Sobolev semi-norm in terms of the bounded mean oscillation semi-norm and of a Sobolev semi-norm, with some of the Sobolev semi-norms having fractional order.
\end{abstract}

\thanks{The author was supported by the
Projet de Recherche T.0229.21 ``Singular Harmonic Maps and Asymptotics of Ginzburg--Landau Relaxations'' of the
Fonds de la Recherche Scientifique--FNRS}

\maketitle 

\section{Introduction}

The homogeneous Gagliardo--Nirenberg interpolation inequality for Sobolev space states that if \(d \in \Nset \setminus \set{0}\) and if \(0 \le s_0 < s < s_1\), \(1 \le p, p_0, p_1 \le \infty\) and \(0 < \theta < 1\) fulfil the condition 
\begin{equation}
  (s, \tfrac{1}{p}) = (1 - \theta) \brk[\big]{s_0, \tfrac{1}{p_0}} + \theta\brk[\big]{s_1, \tfrac{1}{p_1}},
\end{equation} 
then, for every function \(f \in \dot{W}^{s_0, p_0} (\Rset^d) \cap \dot{W}^{s_1, p_1} (\Rset^d)\), one has \(f \in \dot{W}^{s, p} (\Rset^d)\), and
\begin{equation}
\label{eq_eev7phaipouxooGuoYooGhah}
  \norm{f}_{\dot{W}^{s, p}(\Rset^d)} \le C \norm{f}_{\dot{W}^{s_0, p_0}(\Rset^d)}^{1 - \theta}  \norm{f}_{\dot{W}^{s_1, p_1}(\Rset^d)}^{\theta},
\end{equation}
unless \(s_1\) is an integer, \(p_1 = 1\) and \(s_1 - s_0 \le 1 - \frac{1}{p_0}\).

When \(s = 0\), we use the convention that \(\dot{W}^{0, p} (\Rset^d) = L^p (\Rset^d)\), and when \(s \in\Nset \setminus \set{0}\) is a positive integer, \(\dot{W}^{s, p} (\Rset^d)\) is the classical integer-order \emph{homogeneous Sobolev space} of \(s\) times weakly differentiable functions \(f : \Rset^d \to \Rset\) such that \(\Deriv^s f \in L^{p} (\Rset^d)\) and
\begin{equation}
\norm{f}_{\dot{W}^{s, p} (\Rset^d)} 
\defeq \brk*{\int_{\Rset^d} \abs{\Deriv^s f}^p}^\frac{1}{p}.
\end{equation}
For \(s_0, s_1, s \in \Nset\) the inequality \eqref{eq_eev7phaipouxooGuoYooGhah} was proved by Gagliardo \cite{Gagliardo_1959} and Nirenberg \cite{Nirenberg_1959} (see also \cite{Fiorenza_Formica_Roskovec_Soudsky_2021}).

When \(s \not \in \Nset\), the \emph{homogeneous fractional Sobolev--Slobodeckiĭ space} \(\dot{W}^{s, p} (\Rset^d)\) can be defined as the set of measurable functions \(f : \Rset^d \to \Rset\) which are \(k\) times weakly differentiable with a finite Gagliardo semi-norm:
\begin{equation}
  \norm{f}_{\dot{W}^{s, p} (\Rset^d)} 
  \defeq \brk*{\int_{\Rset^d} \int_{\Rset^d} \frac{\abs{\Deriv^{k} f (y) - \Deriv^{k} f (x)}^p}{\abs{y - x}^{d + \sigma p}} \dif y \dif x}^\frac{1}{p} < \infty,
\end{equation}
with \(k \in \Nset\), \(\sigma \in \intvo{0}{1}\) and \(s = k + \sigma\);
the characterisation of the range in which the Gagliardo--Nirenberg interpolation inequality \eqref{eq_eev7phaipouxooGuoYooGhah} holds was performed in a series of works \citelist{\cite{Cohen_2000}\cite{Brezis_Mironescu_2001}\cite{Cohen_Dahmen_Daubechies_DeVore_2003}\cite{Cohen_Meyer_Oru_1998}} up to the final complete settlement by Brezis and Mironescu \cite{Brezis_Mironescu_2018}.

We focus on the endpoint case where \(s_0 = 0\) and \(p_0 = \infty\). 
In this case, the inequality \eqref{eq_eev7phaipouxooGuoYooGhah} becomes 
\begin{equation}
\label{eq_ouzieFa8NeeSoonei0pae8ku}
  \norm{f}_{\dot{W}^{s, p}(\Rset^d)}^p \le C \norm{f}_{L^\infty (\Rset^d)}^{p - p_1}  \norm{f}_{\dot{W}^{s_1, p_1}(\Rset^d)}^{p_1},
\end{equation}
and holds under the assumption that \(s p = s_1 p_1\) and either \(s_1 \ne 1\) or \(p_1 > 1\).
It is natural to ask whether the inequality \eqref{eq_ouzieFa8NeeSoonei0pae8ku} can be strengthened by replacing the uniform norm \(\norm{\cdot}_{L^\infty (\Rset^d)}\) by John and Nirenberg’s \emph{bounded mean oscillation} (BMO) semi-norm \(\norm{\cdot}_{\BMO(\Rset^d)}\), which plays an important role in harmonic analysis, calculus of variations and partial differential equations \cite{John_Nirenberg_1961},
that is, whether we have the inequality
\begin{equation}
\label{eq_iethieNie4Zou8ohch0cee2n}
  \norm{f}_{\dot{W}^{s, p}(\Rset^d)}^p \le 
  C \norm{f}_{\BMO(\Rset^d)}^{p - p_1}  \norm{f}_{\dot{W}^{s_1, p_1}(\Rset^d)}^{p_1},
\end{equation}
where the bounded mean oscillation semi-norm \(\norm{\cdot}_{\BMO(\Rset^d)}\) is defined for any measurable function \(f : \Rset^d \to \Rset\) as
\begin{equation}
\label{eq_oothaKahp8pe7OGheix4phoo}
 \norm{f}_{\BMO(\Rset^d)}
 \defeq 
 \sup_{\substack {x \in \Rset^d\\ r > 0}}
 \fint_{B_r (x)} \fint_{B_r (x)} \abs{f (y) - f (z)}\dif y \dif z.
\end{equation}
The estimate \eqref{eq_iethieNie4Zou8ohch0cee2n} was proved indeed when \(s = 1\), \(p = 4\), \(s_1 = 2\) and \(p_1 = 2\) via a Littlewood--Paley decomposition by Meyer and Rivière \citelist{\cite{Meyer_Riviere_2003}*{theorem 1.4}},
and for  \(s, s_1 \in \Nset\) via the duality between \(\mathrm{BMO} (\Rset^d)\) and the real Hardy space \(\mathcal{H}^1 (\Rset^d)\) by Strezelecki \cite{Strzelecki_2006}; a direct proof was been given recently by Miyazaki \cite{Miyazaki_2020} (in the limiting case \(s_0 = s_1 = 0\), see \citelist{\cite{Kozono_Wadade_2008}*{theorem 2.2}\cite{Chen_Zhu_2005}});
when \(s_1 < 1\), the estimate \eqref{eq_iethieNie4Zou8ohch0cee2n} has been proved by Brezis and Mironescu through a Littlewood--Paley decomposition \citelist{\cite{Brezis_Mironescu_2021}*{lemma 15.7}} (see also \citelist{\cite{Adams_Frazier_1992}\cite{Kozono_Wadade_2008}} for similar estimates in Riesz potential spaces).

The main result (\cref{proposition_GN_Wsp_W1p_BMO}) of the present work is the estimate \eqref{eq_iethieNie4Zou8ohch0cee2n} when \(s_1 = 1\) and \(0 < s < 1\), with a proof which is quite elementary: the main analytical tool is the classical maximal function theorem. 
We also show how the same ideas can be used to give a direct proof of \eqref{eq_iethieNie4Zou8ohch0cee2n} when \(s_1 < 1\), depending only on the definitions of the Gagliardo and bounded mean oscillation semi-norms (\cref{proposition_GN_Wsp_Wsp_BMO}).
Finally, we show how a last interpolation result (\cref{proposition_higher_order}) allows one to obtain the full range of interpolation between \(\mathrm{BMO} (\Rset^d)\) and higher-order fractional Sobolev--Slobodeckiĭ spaces \(\dot{W}^{s, p} (\Rset^d)\) with \(s \in \intvo{1}{\infty}\).

Our proofs can be considered as fractional counterparts of Miyazaki's direct proof in the integer-order case \cite{Miyazaki_2020}. We also refer to Dao's recent work \cite{Dao} for an alternative approach via negative-order Besov spaces to the results in the present paper.

\section{Interpolation between first-order Sobolev semi-norm and mean oscillation}

We prove the following interpolation inequality between the fist-order Sobolev semi-norm and the mean oscillation seminorm into fractional Sobolev spaces.

\begin{theorem}
  \label{proposition_GN_Wsp_W1p_BMO}
For every \(d \in \Nset \setminus \set{0}\) and every \(p\in \intvo{1}{\infty}\), there exists a constant \(C(p) > 0\) such that for every \(s \in \intvo{1/p}{1}\), every open convex set \(\Omega \subseteq \Rset^d\) satisfying \(\varkappa (\Omega) < \infty\) and every function \(f \in \dot{W}^{1, sp} (\Omega) \cap \BMO (\Omega)\), one has
\(f \in \dot{W}^{s, p} (\Omega)\) and 
  \begin{equation}
  \label{eq_aijei2Xaighei4obeichiiKi}
  \iint\limits_{\Omega \times \Omega}
  \frac{\abs{f (y) - f (x)}^p}{\abs{y - x}^{d + sp}}
  \dif y
  \dif x \le 
  \frac{C(p) \varkappa(\Omega)^{sp}}{(sp - 1)(1 - s)} 
  \norm{f}_{\mathrm{BMO}(\Omega)}^{(1 -s) p}
  \int_{\Omega} \abs{\Deriv f}^{sp}.
  \end{equation}
\end{theorem}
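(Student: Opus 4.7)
The plan is to combine a pointwise Sobolev estimate (to cancel a factor \(\abs{y-x}^{sp}\)) with a BMO oscillation bound (to control the remaining \((1-s)p\) power of \(\abs{f(y)-f(x)}\)), organised dyadically in the scale \(\abs{y-x}\).

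First, split
\[
\frac{\abs{f(y)-f(x)}^p}{\abs{y-x}^{d+sp}}=\frac{\abs{f(y)-f(x)}^{(1-s)p}}{\abs{y-x}^d}\cdot\frac{\abs{f(y)-f(x)}^{sp}}{\abs{y-x}^{sp}},
\]
and apply the pointwise estimate \(\abs{f(y)-f(x)}\le C\abs{y-x}\brk*{\maxfun_\Omega\abs{\Deriv f}(x)+\maxfun_\Omega\abs{\Deriv f}(y)}\) (valid on the convex set \(\Omega\) for every Sobolev function, with \(\maxfun_\Omega\) the Hardy--Littlewood maximal function localised to \(\Omega\)) to the \(sp\)-power factor only, cancelling the \(\abs{y-x}^{sp}\) in the denominator. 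A symmetrisation \(x \leftrightarrow y\) then reduces the proof to bounding
\[
J=\int_\Omega \brk*{\maxfun_\Omega\abs{\Deriv f}(x)}^{sp}\, I(x)\dif x, \qquad I(x)=\int_\Omega \frac{\abs{f(y)-f(x)}^{(1-s)p}}{\abs{y-x}^d}\dif y.
\]

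Next, for fixed \(x \in \Omega\) I would decompose \(I(x)\) over the dyadic annuli \(A_k=\set{y\in\Omega\st 2^{-k-1}\varkappa(\Omega)<\abs{y-x}\le 2^{-k}\varkappa(\Omega)}\) and, letting \(B_k=B_{2^{-k}\varkappa(\Omega)}(x)\cap\Omega\), write \(\abs{f(y)-f(x)}^{(1-s)p}\le C\brk*{\abs{f(y)-f_{B_k}}^{(1-s)p}+\abs{f_{B_k}-f(x)}^{(1-s)p}}\). The first summand is controlled uniformly in scale by the John--Nirenberg inequality \(\fint_{B_k}\abs{f-f_{B_k}}^{(1-s)p}\le C\norm{f}_{\BMO(\Omega)}^{(1-s)p}\); the second by the pointwise Poincaré estimate \(\abs{f_{B_k}-f(x)}\le C\cdot 2^{-k}\varkappa(\Omega)\,\maxfun_\Omega\abs{\Deriv f}(x)\), which provides a geometric decay of rate \(2^{-k(1-s)p}\). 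Summing the two contributions at the correct crossover between the scale-invariant BMO bound and the scale-linear Poincaré bound produces a geometric series whose sum gives the singular factor \(1/(1-s)\) together with the scale factor \(\varkappa(\Omega)^{sp}\), once the cross term is absorbed through the outer \((\maxfun_\Omega\abs{\Deriv f})^{sp}\)-weight.

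Inserting the resulting bound on \(I(x)\) back into \(J\), the outer integral collapses to \(\int_\Omega\brk*{\maxfun_\Omega\abs{\Deriv f}}^{sp}\), to which the Hardy--Littlewood maximal theorem applies since \(sp>1\); its sharp constant at exponent \(sp\) contributes the remaining factor \(1/(sp-1)\) and yields the claimed bound by a multiple of \(\int_\Omega\abs{\Deriv f}^{sp}\). The main obstacle is the second step: the pure BMO dyadic sum diverges, and the Poincaré bound alone carries no BMO information, so the two estimates have to be balanced at the correct crossover scale and the cross term \(\abs{f_{B_k}-f(x)}\) must be absorbed into the outer maximal-function weight in a way that produces the clean prefactor \(\varkappa(\Omega)^{sp}/[(sp-1)(1-s)]\) with no logarithmic loss and with the correct power of \(\varkappa(\Omega)\).
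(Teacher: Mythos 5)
Your overall architecture --- gradient maximal function at small scales, BMO at large scales, a crossover scale, and the Hardy--Littlewood maximal theorem in \(L^{sp}\) at the end --- is the same as the paper's, but the opening reduction is fatally flawed. Applying the Lusin--Lipschitz bound uniformly in \((x,y)\) to the factor \(\abs{f(y)-f(x)}^{sp}/\abs{y-x}^{sp}\) leaves the kernel \(\abs{y-x}^{-d}\), which is not integrable at infinity, and the quantity \(I(x)=\int_\Omega\abs{f(y)-f(x)}^{(1-s)p}\abs{y-x}^{-d}\dif y\) is generically infinite: already for a nonconstant \(f\in C^\infty_c(\Rset^d)\) and any \(x\) with \(f(x)\ne 0\) the integrand behaves like \(\abs{f(x)}^{(1-s)p}\abs{y-x}^{-d}\) for large \(\abs{y}\), and for a typical unbounded BMO function such as \(\ln\abs{y}\) it even grows. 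Your own dyadic analysis shows the problem: the small-scale (Poincar\'e) half of the sum converges geometrically and does produce \(1/((1-s)p)\), but the large-scale half contributes a fixed \(\norm{f}_{\BMO(\Omega)}^{(1-s)p}\) per dyadic annulus above the crossover, i.e.\ about \(\log_2(\diam(\Omega)/\varrho)\) terms with no decay; no choice of crossover and no absorption of the cross term \(\abs{f_{B_k}-f(x)}\) into the outer weight can repair a divergent tail. (Two smaller points: \(\varkappa(\Omega)\) is a dimensionless measure ratio, not a length, so it cannot serve as the outer radius of your annuli; and when \((1-s)p>1\) Jensen's inequality goes the wrong way, so the \(L^1\) Poincar\'e bound does not control \(\fint_{B_k}\abs{f-f_{B_k}}^{(1-s)p}\) at small scales either, forcing you back onto the nondecaying BMO term there as well.)

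The fix is to not separate the two powers of \(\abs{y-x}\) globally. Keep the full weight and reduce instead to \(\int_\Omega\int_0^{\diam\Omega}\brk[\big]{\fint_{\Omega\cap B_r(x)}\abs{f-f(x)}}^p\, r^{-1-sp}\dif r\dif x\); for \(r<\varrho\) bound the inner average by \(\varkappa(\Omega)\,r\,\maxfun\abs{\Deriv f}(x)\) (the gain \(r^{p}\) beats \(r^{-1-sp}\) since \((1-s)p>0\)), while for \(r>\varrho\) bound it by \(\varrho\,\varkappa(\Omega)\,\maxfun\abs{\Deriv f}(x)+C\brk{1+d\ln(r/\varrho)}\norm{f}_{\BMO(\Omega)}\); the surviving weight \(r^{-1-sp}\) makes the logarithm integrable at infinity, which is where part of the factor \(1/(sp-1)\) comes from. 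Optimising \(\varrho=\norm{f}_{\BMO(\Omega)}/(\varkappa(\Omega)\maxfun\abs{\Deriv f}(x))\) then yields the pointwise bound \(\brk{\varkappa(\Omega)\maxfun\abs{\Deriv f}(x)}^{sp}\norm{f}_{\BMO(\Omega)}^{(1-s)p}/(1-s)\), which integrates to the claim by the maximal theorem exactly as you intended.
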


We define here for a domain \(\Omega \subseteq \Rset^d\), the \emph{bounded mean oscillation semi-norm} of a measurable function \(f: \Omega \to \Rset\) as
\begin{equation}
\label{eq_HakeeGhae0ku0yohy1jied9E}
 \norm{f}_{\BMO(\Omega)}
 \defeq 
 \sup_{\substack {x \in \Omega\\ r > 0}}
 \fint_{\Omega \cap B_r (x)} \fint_{\Omega \cap B_r (x)} \abs{f (y) - f (z)}\dif y \dif z,
\end{equation}
and the geometric quantity
\begin{equation}
\label{eq_waiC3theiwooDaithee1aeLa}
 \varkappa (\Omega)
 \defeq \sup \set*{\frac{\mathcal{L}^d (B_r (x))}{\mathcal{L}^d (\Omega \cap B_r (x))}
 \st x \in \Omega \text{ and } r \in \intvo{0}{\diam (\Omega)} }.
\end{equation}
For the latter quantity, one has for example
\begin{equation}
 \varkappa (\Rset^d) = 1
\end{equation}
and 
\begin{equation}
 \varkappa (\Rset^d_+) = 2.
\end{equation}
If the set \(\Omega\) is convex and bounded, we have \(\Omega \subseteq B_{\diam (\Omega)} (x)\) and 
\(t \Omega + (1 - t)x \subseteq \Omega \cap B_r (x)\), with \(t \defeq r/\diam (\Omega)\), so that 
\begin{equation*}
\mathcal{L}^d (\Omega \cap B_{r} (x)) 
\ge 
t^d \mathcal{L}^d (\Omega)
= 
 \frac{\mathcal{L}^d (\Omega)}{\diam (\Omega)^d}r^d,
\end{equation*}
and thus 
\begin{equation}
 \varkappa (\Omega) \le \frac{\mathcal{L}^d (B_1)} {\mathcal{L}^d (\Omega)} \diam (\Omega)^d.
\end{equation}
The quantity \(\varkappa (\Omega)\) can be infinite for some unbounded convex sets such as  \(\Omega = \intvo{0}{1}\times \Rset^{d - 1}\) and \(\Omega = \set{(x', x_d) \in \Rset^d \st x_d \ge \abs{x'}^2}\).

Our first tool to prove \cref{proposition_GN_Wsp_W1p_BMO} is an estimate by the maximal function of the derivative of the average distance of values on a ball to a fixed value; 
this formula is related to the \emph{Lusin--Lipschitz inequality} \citelist{\cite{Liu_1977}*{lemma\ 2}\cite{Acerbi_Fusco_1984}*{lemma\ II.1}\cite{Bojarski_1990}\cite{Hajlasz_1996}*{p.\ 404}\cite{Jabin_2010}*{(3.3)}}.

\begin{lemma}
\label{lemma_osc_MDu}
If the set \(\Omega \subseteq \Rset^d\) is open and convex and if \(f \in \dot{W}^{1, 1}_{\mathrm{loc}} (\Omega)\), then for every \(r \in \intvo{0}{\diam(\Omega)}\) and almost every \(x \in \Omega\), 
\begin{equation}
\label{eq_eutai3kaeviuGhaisiechoqu}
\fint_{\Omega \cap B_{r} (x)} \abs{f (z) - f (x)} \dif z
\le  \varkappa (\Omega)\, r\,
\maxfun \abs{\Deriv f} (x).
\end{equation}
\end{lemma}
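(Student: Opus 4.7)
The plan is to reduce the bound to the Hardy--Littlewood maximal function via the fundamental theorem of calculus along segments, exploiting the convexity of \(\Omega\) in a crucial way.

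First I would fix \(x \in \Omega\) at which \(f\) is sufficiently regular (e.g., a Lebesgue point of \(\Deriv f\) and such that \(f\) is absolutely continuous along almost every segment issuing from \(x\)). For \(z \in \Omega \cap B_r(x)\), convexity gives \(x + t(z-x) \in \Omega\) for all \(t \in \intvc{0}{1}\), so by the fundamental theorem of calculus
\begin{equation*}
\abs{f(z) - f(x)} \le \abs{z - x} \int_0^1 \abs{\Deriv f (x + t(z - x))} \dif t \le r \int_0^1 \abs{\Deriv f (x + t(z - x))} \dif t.
\end{equation*}

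Next I would integrate over \(z \in \Omega \cap B_r (x)\), swap the order of integration by Fubini, and for each fixed \(t \in \intvo{0}{1}\) perform the change of variables \(w = x + t(z - x)\), whose Jacobian is \(t^d\). Because \(\Omega\) is convex and contains \(x\), the image of \(\Omega \cap B_r(x)\) under \(z \mapsto x + t(z-x)\) is contained in \(\Omega \cap B_{tr}(x)\), so
\begin{equation*}
\int_{\Omega \cap B_r(x)} \abs{\Deriv f (x + t(z - x))} \dif z
\le t^{-d} \int_{\Omega \cap B_{tr}(x)} \abs{\Deriv f (w)} \dif w
\le t^{-d} \, \mathcal{L}^d(B_{tr}(x)) \, \maxfun \abs{\Deriv f}(x),
\end{equation*}
where the final inequality is the defining bound of the maximal function (extending \(\abs{\Deriv f}\) by zero outside \(\Omega\)). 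The factor \(t^{-d}\) cancels the \(t^d\) from \(\mathcal{L}^d(B_{tr}(x)) = t^d \mathcal{L}^d(B_r(x))\), so integration in \(t\) over \(\intvo{0}{1}\) yields
\begin{equation*}
\int_{\Omega \cap B_r (x)} \abs{f(z) - f(x)} \dif z \le r \, \mathcal{L}^d(B_r(x)) \, \maxfun \abs{\Deriv f}(x).
\end{equation*}

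Finally, dividing by \(\mathcal{L}^d(\Omega \cap B_r(x))\) produces the factor \(\mathcal{L}^d(B_r(x)) / \mathcal{L}^d(\Omega \cap B_r(x))\), which is bounded by \(\varkappa(\Omega)\) by the very definition \eqref{eq_waiC3theiwooDaithee1aeLa} since \(r < \diam(\Omega)\). This yields the claimed inequality. The only delicate point is the change of variables step: one must verify that the image of \(\Omega \cap B_r(x)\) under the contraction \(z \mapsto x + t(z-x)\) stays inside \(\Omega \cap B_{tr}(x)\), which is exactly where convexity of \(\Omega\) is needed; the remainder is a routine Fubini and maximal-function computation.
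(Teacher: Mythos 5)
Your proposal is correct and follows essentially the same route as the paper's proof: the fundamental theorem of calculus along segments (using convexity), Fubini, the change of variables \(w = x + t(z-x)\) with image in \(\Omega \cap B_{tr}(x)\), the maximal-function bound, and finally the definition of \(\varkappa(\Omega)\). The only cosmetic difference is that you bound \(\abs{z-x}\le r\) before the change of variables while the paper bounds \(\abs{y-x}\le tr\) afterwards; both yield the same estimate.
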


Here \(\maxfun g : \Rset^d \to \intvc{0}{+\infty}\) denotes the classical \emph{Hardy--Littlewood maximal function} of the function \(g : \Omega \to \Rset\), defined for each \(x \in \Rset^d\) by 
\begin{equation}
\label{eq_JahvaePieG5cahth4AepheCi}
\maxfun g (x)
\defeq
\sup_{r > 0} \frac{1}{\mathcal{L}^d (B_r (x))} \int_{\Omega \cap B_r (x)} \abs{g}.
\end{equation}

\begin{proof}[Proof of \cref{lemma_osc_MDu}]
Since \(\Omega\) is convex and \(f \in \dot{W}^{1, 1} (\Omega)\), for almost every \(x \in \Omega\) and every \(r \in \intvo{0}{\infty}\), we have 
\begin{equation}
\label{eq_wa1lohNgiechei5oos0ae0Oh}
 \int_{\Omega \cap B_{r} (x)} \abs[\big]{f (z) - f (x)} \dif z
\le {\int_{\Omega \cap B_{r}(x)}} \int_{0}^{1} \abs{\Deriv f ((1-t) x + t z)[z - x]} \dif t \dif z.
\end{equation}
By convexity of the set \(\Omega\), for every \(z \in \Omega \cap B_{r}(x)\) and \(t \in \intvc{0}{1}\) we have \((1 -t)x + t z \in \Omega \cap B_{t r}(x)\).
We deduce from \eqref{eq_JahvaePieG5cahth4AepheCi} and \eqref{eq_wa1lohNgiechei5oos0ae0Oh}
through the change of variable \(y = (1-t)x + tz\) that 
\begin{equation}
\begin{split}
\int_{\Omega \cap B_{r} (x)} \abs{f (z) - f (x)} \dif z
&\le \int_0^1 \int_{\Omega \cap B_{tr} (x)} \frac{\abs{\Deriv f (y)[y - x]}}{t^{d + 1}} \dif y \dif t\\
&\le r \maxfun{\abs{\Deriv f}}(x) \int_0^1 \frac{\mathcal{L}^d (B_{tr} (x))}{t^{d}}  \dif t\\
 &\le r \mathcal{L}^d (B_r (0))
 \maxfun \abs{\Deriv f} (x),
\end{split}
\end{equation}
in view of the definition \eqref{eq_HakeeGhae0ku0yohy1jied9E} of the maximal function, 
and the conclusion \eqref{eq_eutai3kaeviuGhaisiechoqu} then follows from the definition of the geometric quantity \(\varkappa (\Omega)\) in \eqref{eq_waiC3theiwooDaithee1aeLa}.
\end{proof}

Our second tool to prove \cref{proposition_GN_Wsp_W1p_BMO} is the following property of averages of functions of bounded mean oscillation (see \citelist{\cite{Carleson_1976}*{\S 3}}).

\begin{lemma}
\label{lemma_BMO_log}
If the set \(\Omega \subseteq \Rset^d\) is open and convex, if \(f \in \BMO (\Omega) \)
and if \(r_0 < r_1\),  then 
\begin{equation}
\label{eq_eiboh7im4ZahghuLaiv9Eene}
  \fint_{\Omega \cap B_{r_0}(x)} \fint_{\Omega \cap B_{r_1} (x)}
  \abs{f (y) - f (z)}\dif y \dif z
  \le  e \brk[\big]{1 + d \ln \brk*{r_1/r_0}} \norm{f}_{\BMO(\Omega)}.
\end{equation}
\end{lemma}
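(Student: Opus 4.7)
\emph{Strategy.} My plan is to compare the averages $f_{r_0}$ and $f_{r_1}$ of $f$ over $\Omega \cap B_{r_0}(x)$ and $\Omega \cap B_{r_1}(x)$ through a telescoping sum along a geometric chain of intermediate radii, using a convexity-based volume comparison to control each telescoped difference, and then optimising the chain length to produce the logarithmic factor in $r_1/r_0$.

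\emph{Preliminaries.} First, for $0 < r \le R$, the affine contraction $y \mapsto x + (r/R)(y-x)$ sends $\Omega \cap B_R(x)$ into $\Omega \cap B_r(x)$: convexity of $\Omega$ and $x \in \Omega$ keep the image in $\Omega$, while the Euclidean distance to $x$ is scaled by $r/R$. Computing the Jacobian yields $\mathcal{L}^d(\Omega \cap B_R(x)) \le (R/r)^d\, \mathcal{L}^d(\Omega \cap B_r(x))$. Second, writing $f_\rho := \fint_{\Omega \cap B_\rho(x)} f$, Jensen's inequality applied to \eqref{eq_HakeeGhae0ku0yohy1jied9E} gives $\fint_{\Omega \cap B_\rho(x)} |f - f_\rho| \le \|f\|_{\BMO(\Omega)}$.

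\emph{Chain and telescoping.} For a positive integer $N$ to be chosen, I set $\rho_k := r_0 (r_1/r_0)^{k/N}$, so that $\rho_0 = r_0$, $\rho_N = r_1$, and the ratio $\alpha := \rho_{k+1}/\rho_k = (r_1/r_0)^{1/N}$ is constant. Since $\Omega \cap B_{\rho_k}(x) \subseteq \Omega \cap B_{\rho_{k+1}}(x)$, the two preliminaries combine to give $|f_{\rho_k} - f_{\rho_{k+1}}| \le \alpha^d \|f\|_{\BMO(\Omega)}$, and hence $|f_{r_0} - f_{r_1}| \le N\alpha^d\, \|f\|_{\BMO(\Omega)}$ by telescoping. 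A further threefold triangle inequality $|f(y) - f(z)| \le |f(y) - f_{r_0}| + |f_{r_0} - f_{r_1}| + |f_{r_1} - f(z)|$ then yields an $(N\alpha^d + 2)\|f\|_{\BMO(\Omega)}$ bound on the left-hand side of \eqref{eq_eiboh7im4ZahghuLaiv9Eene}. Choosing $N := \max\{1, \lceil d\ln(r_1/r_0)\rceil\}$ forces $\alpha^d = (r_1/r_0)^{d/N} \le e$ and $N \le 1 + d\ln(r_1/r_0)$, placing the right-hand side in the announced form.

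\emph{Main obstacle.} The essential mechanism is the trade-off, optimised by $N \sim d \ln(r_1/r_0)$, between the chain ratio $\alpha^d$ and the chain length $N$, which is what generates the logarithm and the precise factor $e$. The delicate point will be producing exactly the constant $e(1 + d\ln(r_1/r_0))$: the additive $2\|f\|_{\BMO(\Omega)}$ from the endpoint triangle inequality must be absorbed, which I expect to handle either by shifting the chain slightly so that the endpoint contributions become extra chain steps of size $\alpha^d\|f\|_{\BMO(\Omega)}$, or by a more careful book-keeping in the triangle decomposition $f(y) \to f_{r_0} \to \cdots \to f_{r_1} \to f(z)$.
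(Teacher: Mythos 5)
Your chain of radii $\rho_k = r_0(r_1/r_0)^{k/N}$, the convexity-based volume comparison, and the choice $N \approx d\ln(r_1/r_0)$ are exactly the mechanism of the paper's proof, and every individual estimate you state is correct. The one genuine gap is the one you flag yourself: telescoping the single averages $f_{\rho_k}$ leaves you with the bound $(N\alpha^d + 2)\norm{f}_{\BMO(\Omega)}$, and the additive $2\norm{f}_{\BMO(\Omega)}$ coming from the two endpoint comparisons $\fint_{\Omega\cap B_{r_1}(x)}\abs{f - f_{r_1}}$ and $\fint_{\Omega\cap B_{r_0}(x)}\abs{f - f_{r_0}}$ cannot be absorbed into the stated constant. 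Indeed, since $N \ge 1$ and $\alpha > 1$, your bound always exceeds $3\norm{f}_{\BMO(\Omega)}$, whereas as $r_1/r_0 \to 1^+$ the right-hand side of \eqref{eq_eiboh7im4ZahghuLaiv9Eene} tends to $e\norm{f}_{\BMO(\Omega)} < 3\norm{f}_{\BMO(\Omega)}$. So neither of your proposed repairs will work as long as the chain runs through the scalars $f_{\rho_k}$: the two endpoint oscillations each cost a full $\norm{f}_{\BMO(\Omega)}$ no matter how the chain is shifted.

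The fix --- and the only real difference from the paper's argument --- is to telescope the two-ball mean oscillations instead of the averages. The elementary inequality
\begin{equation*}
\fint_{A}\fint_{B}\abs{f(y)-f(x)}\dif y\dif x \le \fint_{A}\fint_{C}\abs{f(z)-f(x)}\dif z\dif x + \fint_{C}\fint_{B}\abs{f(y)-f(z)}\dif y\dif z
\end{equation*}
(insert an average over $C$ and use the pointwise triangle inequality; this is \cref{lemma_triangle}), applied along your chain, gives
\begin{equation*}
\fint_{\Omega\cap B_{r_0}(x)}\fint_{\Omega\cap B_{r_1}(x)}\abs{f(y)-f(z)}\dif y\dif z \le \sum_{j=0}^{N-1}\;\fint_{\Omega\cap B_{\rho_j}(x)}\fint_{\Omega\cap B_{\rho_{j+1}}(x)}\abs{f(y)-f(z)}\dif y\dif z,
\end{equation*}
and each summand is at most $\alpha^d\norm{f}_{\BMO(\Omega)}$ by your volume comparison, after enlarging the average over $\Omega\cap B_{\rho_j}(x)$ to one over $\Omega\cap B_{\rho_{j+1}}(x)$. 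There are no endpoint terms, the total is exactly $N\alpha^d\norm{f}_{\BMO(\Omega)}$, and your choice of $N$ then yields \eqref{eq_eiboh7im4ZahghuLaiv9Eene} verbatim.
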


In \eqref{eq_eiboh7im4ZahghuLaiv9Eene}, \(e\) denotes Euler's number.

The proof of \cref{lemma_BMO_log} will use the following triangle inequality for averages
\begin{lemma}
\label{lemma_triangle}
Let \(\Omega \subseteq \Rset^d\). If the function \(f : \Omega \to \Rset\) is measurable, and the sets \(A, B, C \subseteq \Rset^d\) are measurable and have positive measure, then 
\begin{equation*}
 \fint_{A} \fint_{B} \abs{f (y) - f (x)} \dif y \dif x
 \le \fint_{A} \fint_{C} \abs{f (z) - f (x)} \dif z \dif x
 +
 \fint_{C} \fint_{B} \abs{f (y) - f (z)} \dif y \dif z.
\end{equation*}
\end{lemma}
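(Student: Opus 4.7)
The plan is to reduce the statement to the pointwise triangle inequality
\[
\abs{f(y) - f(x)} \le \abs{f(y) - f(z)} + \abs{f(z) - f(x)}
\]
holding for every \(x, y, z \in \Omega\) (wherever \(f\) is defined), and then to average out the auxiliary variable \(z\) over the set \(C\).

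More precisely, I would first fix arbitrary \(x \in A\) and \(y \in B\) (at which \(f\) is defined). The left-hand side \(\abs{f(y) - f(x)}\) does not depend on \(z\); averaging the pointwise triangle inequality with respect to \(z \in C\) therefore yields
\[
\abs{f(y) - f(x)}
\le
\fint_{C} \abs{f(z) - f(x)} \dif z
+
\fint_{C} \abs{f(y) - f(z)} \dif z.
\]

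Next I would integrate this estimate against \(\fint_{A} \dif x\) and \(\fint_{B} \dif y\) and invoke Tonelli's theorem to exchange the order of integration in each of the two terms on the right. The first term on the right does not depend on \(y \in B\), so integrating it with \(\fint_{B} \dif y\) leaves it unchanged, and a subsequent \(\fint_{A} \dif x\) yields \(\fint_{A} \fint_{C} \abs{f(z) - f(x)} \dif z \dif x\). Symmetrically, the second term on the right is independent of \(x \in A\) and its double average becomes \(\fint_{C} \fint_{B} \abs{f(y) - f(z)} \dif y \dif z\). Summing produces the claimed inequality.

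There is essentially no obstacle: the only thing to verify is that each of the averages is well-defined, which is guaranteed by the measurability of \(f\) together with the positive, finite measure of \(A, B, C\), so Tonelli is applicable to the nonnegative integrands throughout. The argument works without any regularity beyond measurability and makes no use of the BMO hypothesis; the lemma will then be combined with \cref{lemma_osc_MDu} in the proof of \cref{lemma_BMO_log} to chain dyadic scales.
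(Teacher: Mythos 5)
Your proposal is correct and is essentially the paper's own argument: both insert the average over \(C\) of the pointwise triangle inequality and then identify the two resulting double averages via Tonelli, the only difference being the (immaterial) order in which the averages are taken. No issues.
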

\begin{proof}
We have successively, in view of the triangle inequality,
\begin{equation}
\begin{split}
 \fint_{A} \fint_{B} \abs{f (y) - f (x)} \dif y \dif x
 &= \fint_{A} \fint_{B} \fint_{C}  \abs{f (y) - f (x)} \dif z \dif y \dif x\\
 & \le \fint_{A} \fint_{B}  \abs{f (z) - f (x)} + \abs{f (y) - f (z)} \dif z \dif y \dif x\\
 &= \fint_{A} \fint_{C} \abs{f (z) - f (x)} \dif z \dif x
 +
 \fint_{C} \fint_{B} \abs{f (y) - f (z)} \dif y \dif z. \qedhere
 \end{split}
\end{equation}

\end{proof}

\begin{proof}[Proof of \cref{lemma_BMO_log}]
We first note that since \(r_1 > r_0\), we have in view of \eqref{eq_HakeeGhae0ku0yohy1jied9E}
\begin{equation}
\label{eq_voh1xobai3Eshai7aeh8Na1u}
\begin{split}
  \fint_{\Omega \cap B_{r_0}(x)} &\fint_{\Omega \cap B_{r_1} (x)} \abs{f (y) - f (z)}\dif y \dif z\\
  &\le \frac{\mathcal{L}^d (\Omega \cap B_{r_1} (x))}{\mathcal{L}^d (\Omega \cap B_{r_0} (x))} \fint_{\Omega \cap B_{r_1}(x)} \fint_{\Omega \cap B_{r_1} (x)} \abs{f (y) - f (z)}\dif y \dif z\\
  &\le \brk*{\frac{r_1}{r_0}}^d \norm{f}_{\BMO(\Omega)},
\end{split}
\end{equation}
since by convexity \(r_0/r_1(\Omega \cap B_{r_1} (x)) \subseteq\Omega \cap B_{r_0} (x)\) and thus
\(\mathcal{L}^d (\Omega \cap B_{r_1} (x))/r_1^d \le \mathcal{L}^d (\Omega \cap B_{r_0} (x))/r_0^d\).
Applying \(k \in \Nset\setminus \set{0}\) times the inequality \eqref{eq_voh1xobai3Eshai7aeh8Na1u}, we get thanks to the triangle inequality for mean oscillation of \cref{lemma_triangle},
\begin{equation}
\begin{split}
 \fint_{\Omega \cap B_{r_0}(x)}& \fint_{\Omega \cap B_{r_1} (x)} \abs{f (y) - f (z)}\dif y \dif z\\
&\le \sum_{j = 0}^{k - 1}\fint_{\Omega \cap B_{r_0(r_1/r_0)^{j/k}}(x)} \fint_{\Omega \cap B_{r_0(r_1/r_0)^{(j + 1)/k}} (x)} \abs{f (y) - f (z)}\dif y \dif z\\
 &\le k \brk*{\frac{r_1}{r_0}}^{d/k} \norm{f}_{\BMO(\Omega)}.
\end{split}
\end{equation}
Taking \(k \in \Nset\setminus \set{0}\) such that \(k - 1 < d \ln (r_1/r_0) \le k\), we obtain
the conclusion \eqref{eq_eiboh7im4ZahghuLaiv9Eene}.
\end{proof}

Our last tool to prove \cref{proposition_GN_Wsp_W1p_BMO} is the following integral identity.

\begin{lemma}
\label{lemma_Gamma}
For every \(p \in \intvo{1}{\infty}\) and \(\alpha \in \intvo{0}{\infty}\), one has 
\begin{equation*}
 \int_1^\infty \frac{(\ln r)^{p}}{r^{1 + \alpha}} \dif r = \frac{\Gamma (p + 1)}{\alpha^{p + 1}}.
\end{equation*}
\end{lemma}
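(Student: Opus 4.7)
The plan is to reduce the integral to the standard integral representation of the Gamma function, $\Gamma(p+1) = \int_0^\infty u^p e^{-u}\dif u$, by a single change of variables. The natural substitution is $u \defeq \alpha \ln r$, so that $r = e^{u/\alpha}$ and $u$ ranges over $\intvo{0}{\infty}$ as $r$ ranges over $\intvo{1}{\infty}$.

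With this substitution, $\ln r = u/\alpha$ gives $(\ln r)^p = u^p/\alpha^p$, while the factor $\dif r / r^{1+\alpha}$ can be rewritten as $r^{-\alpha}\,\dif r/r = e^{-u}\cdot \dif u/\alpha$. Substituting these into the integral produces
\begin{equation*}
\int_1^\infty \frac{(\ln r)^p}{r^{1+\alpha}}\dif r
= \int_0^\infty \frac{u^p}{\alpha^p}\,e^{-u}\,\frac{\dif u}{\alpha}
= \frac{1}{\alpha^{p+1}}\int_0^\infty u^p e^{-u}\dif u = \frac{\Gamma(p+1)}{\alpha^{p+1}},
\end{equation*}
which is the desired identity.

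There is no real obstacle here: the statement is a straightforward change-of-variable computation, and the condition $p > 1$ (in fact any $p > -1$ would do) together with $\alpha > 0$ guarantees absolute convergence of both integrals, so Fubini/substitution is trivially justified.
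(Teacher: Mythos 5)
Your substitution $u=\alpha\ln r$ (equivalently $r=\exp(u/\alpha)$) is exactly the change of variable the paper uses, and the computation reducing the integral to $\Gamma(p+1)=\int_0^\infty u^p e^{-u}\dif u$ is correct. Nothing to add.
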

\begin{proof}
One performs the change of variable \(r = \exp (t/\alpha)\) in the left-hand side integral and uses the classical integral definition of the Gamma function.
\end{proof}

We now proceed to the proof of \cref{proposition_GN_Wsp_W1p_BMO}.

\begin{proof}%
[Proof of \cref{proposition_GN_Wsp_W1p_BMO}]%
\resetconstant%
For every \(x, y \in \Omega\), we have by the triangle inequality and the domain monotonicity of the integral
  \begin{equation}
      \label{eq_ieph0eiwoose5IeJooxee1wj}
\begin{split}
    \abs{f (y) - f (x)}
    &\le 
    {\fint_{\Omega \cap B_{\abs{y - x}/2}(\frac{x + y}{2})}} \abs{f (y) - f}
    +{
    \fint_{\Omega \cap B_{\abs{y - x}/2}(\frac{x + y}{2})}} \abs{f - f (x)}\\
    & \le 
    {2^d \fint_{\Omega \cap B_{\abs{y - x}}(y)}} \abs{f (y) - f }
    +{2^d 
    \fint_{\Omega \cap B_{\abs{y - x}}(x)}} \abs{f - f (x)},
\end{split}
\end{equation}
since by convexity \(\Omega \cap B_{\abs{y - x}/2} (\frac{x + y}{2}) \subseteq \frac{1}{2} (B_{\abs{y - x}} (x) \cap \Omega) + \frac{y}{2}\).
It follows thus from \eqref{eq_ieph0eiwoose5IeJooxee1wj} by integration and by symmetry that 
\begin{equation}
\begin{split}
 \iint\limits_{\Omega \times \Omega}
 \frac{   \abs{f (y) - f (x)}^p}{\abs{y - x}^{d + sp}} \dif y \dif x
 & \le \C \iint\limits_{\Omega\times \Omega}
 \brk*{\fint_{\Omega \cap B_{\abs{y - x}}(x)} \abs{f - f (x)}}^p \frac{\dif y \dif x}{\abs{y - x}^{d + sp}}\\
 & \le \C \int_{\Omega} \int_0^{\diam \Omega}
 \brk*{\fint_{\Omega \cap B_r(x)} \abs{f - f (x)} }^p \frac{\dif r}{r^{1 + sp}}  \dif x.
\end{split}
\end{equation}
If \(\varrho \in \intvo{0}{\diam (\Omega)}\), we first have by \cref{lemma_osc_MDu}, for almost every \(x \in \Omega\),
\begin{equation}
\label{eq_pheiph4dei2uc0iefo5KaR0j}
\begin{split}
 \int_0^\varrho \brk*{\fint_{\Omega \cap B_r(x)} \abs{f - f (x)} }^p \frac{\dif r}{r^{1 + sp}}
 & \le \brk[\big]{\varkappa (\Omega) \, \maxfun \abs{\Deriv f} (x)}^p \int_0^{\varrho} r^{(1 - s)p - 1} \dif r \\
 &=  \frac{\varrho^{(1 - s)p} \brk[\big]{\varkappa (\Omega)\, \maxfun\abs{\Deriv f}(x)}^p}{(1 - s)p}.
 \end{split}
\end{equation}
Next we have by the triangle inequality, by \cref{lemma_osc_MDu} again and by \cref{lemma_BMO_log}, for every \(r \in \intvo{\varrho}{\diam (\Omega)}\),
\begin{equation}
\label{eq_AeJu8Uveed8teiqu1soR5que}
\begin{split}
\fint_{\Omega \cap B_r(x)} \abs{f - f (x)} & \le \fint_{\Omega \cap B_\varrho (x)} \abs{f - f (x)}
+ \fint_{\Omega \cap B_r (x)} \fint_{\Omega \cap B_\varrho (x)} \abs{f (y) - f (z)} \dif y \dif z\\
&\le  \brk*{\varrho\, \varkappa (\Omega)\,\maxfun{\abs{\Deriv f}}(x) + e (1 + d \ln (r/\varrho)) \norm{f}_{\BMO(\Omega)} },
\end{split}
\end{equation}
and hence, integrating \eqref{eq_AeJu8Uveed8teiqu1soR5que}, we get
\begin{equation}
\label{eq_Tah3Loosiela3aQuoothio2p}
\begin{split}
 \int_\varrho^{\diam (\Omega)} &\brk*{\fint_{\Omega \cap B_r(x)} \abs{f - f (x)}}^p \frac{\dif r}{r^{1 + sp}}\\
 & \le \C \biggl(\int_{\varrho}^{\infty} \frac{\varrho^p \maxfun{\abs{\Deriv f}}(x)^p}{r^{1 + sp}} \dif r \\
 &\hspace{4cm}
 + \int_{\varrho}^\infty  \varkappa (\Omega)^p \norm{f}_{\BMO(\Omega)}^p \frac{(1 + d \ln(r/\varrho))^p}{r^{1 + sp}} \dif r\biggr)\\
 &\le \C \brk*{\frac{\varrho^{(1 - s)p} \varkappa(\Omega)^p \maxfun\abs{\Deriv f}(x)^p}{s} + \frac{\Gamma (p + 1) \norm{f}_{\BMO(\Omega)}^p}{(sp)^{p + 1}\varrho^{sp}}},
 \end{split}
\end{equation}
in view of \cref{lemma_Gamma}.
Putting \eqref{eq_pheiph4dei2uc0iefo5KaR0j} and \eqref{eq_Tah3Loosiela3aQuoothio2p} together, we get,  since \(sp > 1\),
\begin{equation}
\label{eq_quohjeicaedaiY4aewoh5Ooc}
\begin{split}
 \int_0^{\diam (\Omega)} \brk*{\fint_{\Omega \cap B_r(x)} \abs[\big]{f - f (x)} }^p &\frac{\dif r}{r^{1 + sp}}\\
 & \le \C\brk*{\frac{\varrho^{(1 - s) p}  \varkappa (\Omega)^p \maxfun\abs{\Deriv f}(x)^p}{1 - s} + \frac{\norm{f}_{\BMO(\Omega)}^p}{\varrho^{sp}}}.
\end{split}
\end{equation}
If \(\norm{f}_{\mathrm{BMO}(\Omega)} \le \diam (\Omega) \varkappa (\Omega) \maxfun \abs{\Deriv f}(x)\), taking \(\varrho \defeq \norm{f}_{\mathrm{BMO}(\Omega)}/(\varkappa (\Omega) \maxfun \abs{\Deriv f}(x))\) in \eqref{eq_quohjeicaedaiY4aewoh5Ooc}, we obtain 
\begin{equation}
\label{eq_oophae8ocee6Ohphoh4foox8}
  \int_0^{\diam (\Omega)} \brk*{\fint_{\Omega \cap B_r(x)} \abs[\big]{f - f (x)}}^p \frac{\dif r}{r^{1 + sp}} \le  \frac{\C}{1 - s}  \brk*{\varkappa (\Omega)\maxfun\abs{\Deriv f}(x)}^{sp} \,\norm{f}_{\BMO(\Omega)}^{(1-s)p};
\end{equation}
otherwise we take \(\varrho \defeq \diam (\Omega) \le \norm{f}_{\mathrm{BMO}(\Omega)}/(\varkappa (\Omega) \maxfun \abs{\Deriv f}(x))\) in \eqref{eq_pheiph4dei2uc0iefo5KaR0j} and also obtain \eqref{eq_oophae8ocee6Ohphoh4foox8}.
Integrating the inequality \eqref{eq_oophae8ocee6Ohphoh4foox8}, we reach the conclusion \eqref{eq_aijei2Xaighei4obeichiiKi} by the quantitative version of the classical maximal function theorem in \(L^{sp}(\Rset^d)\)  since \(sp > 1\) (see for example \citelist{\cite{Stein_1970}*{theorem I.1}}).
\end{proof}

We conclude this section by pointing out that  \cref{proposition_GN_Wsp_W1p_BMO} admits a \emph{localised version} in terms of Fefferman and Stein's 
 \emph{sharp maximal function} \(f^\sharp:\Omega \to \intvc{0}{\infty}\) which is defined for every \(x \in \Omega\) (see \citelist{\cite{Fefferman_Stein_1972}*{(4.1)}}) as
\begin{equation}
 f^\sharp (x) \defeq \sup_{r > 0} \fint_{\Omega \cap B_r (x)} \fint_{\Omega \cap B_r (x)} \abs{f (y) - f (z)}\dif y \dif z;
\end{equation}
noting that the proof of \cref{lemma_BMO_log} yields in fact the estimate
\begin{equation}
  \fint_{\Omega \cap B_{r_0}(x)} \fint_{\Omega \cap B_{r_1} (x)}
  \abs{f (y) - f (z)}\dif y \dif z
  \le  e \brk[\big]{1 + d \ln \brk*{r_1/r_0}} f^\sharp (x)
\end{equation}
and following then the proof of \cref{proposition_GN_Wsp_W1p_BMO}, we reach the following local counterpart of \eqref{eq_oophae8ocee6Ohphoh4foox8}.

\begin{proposition}
\label{proposition_local_GN_Wsp_W1p_BMO}
For every \(d \in \Nset \setminus \set{0}\) and for every \(p\in \intvo{1}{\infty}\), there exists a constant \(C > 0\) such that for every \(s \in \intvo{1/p}{1}\), for every open convex set \(\Omega \subseteq \Rset^d\) satisfying \(\varkappa (\Omega) < \infty\), for every function \(f \in \dot{W}^{1, 1}_{\mathrm{loc}} (\Omega)\) and for almost every \(x \in \Omega\), we have 
\begin{equation}
\label{eq_thaelohloh9UcahsieBei7Ah}
\int_0^{\diam (\Omega)} \brk*{\fint_{\Omega \cap B_r(x)} \abs{f - f (x)}}^p \frac{\dif r}{r^{1 + sp}}
 \le \frac{C}{1 - s} \brk[\big]{f^\sharp(x)}^{(1-s)p} \brk[\big]{ \varkappa (\Omega) 
 \maxfun{\abs{\Deriv f}} (x)}^{sp}.
\end{equation}
\end{proposition}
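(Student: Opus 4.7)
The plan is to mirror the proof of \cref{proposition_GN_Wsp_W1p_BMO} almost verbatim, with the single change of replacing each occurrence of \(\norm{f}_{\BMO(\Omega)}\) by the pointwise quantity \(f^\sharp (x)\). This works because every appearance of the \(\BMO\) semi-norm in that argument arose from estimating integrals over \(\Omega \cap B_r (x)\) for balls centred at the very point \(x\) at which we wish to localise, and such averages are controlled by \(f^\sharp (x)\) by definition.

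The first step is to localise \cref{lemma_BMO_log}. In its proof, the estimate~\eqref{eq_voh1xobai3Eshai7aeh8Na1u} is obtained by enlarging \(\Omega \cap B_{r_0}(x)\) to \(\Omega \cap B_{r_1}(x)\) and invoking the \(\BMO\) definition at the ball \(\Omega \cap B_{r_1}(x)\); but this last quantity is bounded by \(f^\sharp (x)\) directly, so the same dyadic iteration via \cref{lemma_triangle} yields
\begin{equation*}
 \fint_{\Omega \cap B_{r_0}(x)} \fint_{\Omega \cap B_{r_1} (x)} \abs{f (y) - f (z)}\dif y \dif z \le e \brk[\big]{1 + d \ln (r_1/r_0)} f^\sharp (x),
\end{equation*}
as indicated in the excerpt.

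Next I would run the argument of \cref{proposition_GN_Wsp_W1p_BMO} starting from the decomposition of the radial integral at an arbitrary threshold \(\varrho \in \intvo{0}{\diam (\Omega)}\). For \(r \in \intvo{0}{\varrho}\), \cref{lemma_osc_MDu} gives the same bound~\eqref{eq_pheiph4dei2uc0iefo5KaR0j}. For \(r \in \intvo{\varrho}{\diam(\Omega)}\), the triangle inequality together with \cref{lemma_osc_MDu} applied on \(\Omega \cap B_\varrho (x)\) and the localised version of \cref{lemma_BMO_log} above yields
\begin{equation*}
 \fint_{\Omega \cap B_r(x)} \abs{f - f(x)} \le \varrho\, \varkappa(\Omega)\,\maxfun\abs{\Deriv f}(x) + e \brk[\big]{1 + d \ln (r/\varrho)} f^\sharp (x).
\end{equation*}
Raising to the \(p\)-th power and integrating against \(\dif r/r^{1 + sp}\), with the logarithmic integral evaluated through \cref{lemma_Gamma}, leads to the analogue of~\eqref{eq_quohjeicaedaiY4aewoh5Ooc} with \(\norm{f}_{\BMO(\Omega)}\) replaced by \(f^\sharp (x)\).

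Finally, I would optimise in \(\varrho\) exactly as in the global proof: if \(f^\sharp (x) \le \diam(\Omega)\, \varkappa(\Omega)\, \maxfun\abs{\Deriv f}(x)\), set \(\varrho \defeq f^\sharp (x)/(\varkappa(\Omega)\, \maxfun\abs{\Deriv f}(x))\) to equalise the two terms and obtain~\eqref{eq_thaelohloh9UcahsieBei7Ah}; otherwise take \(\varrho \defeq \diam(\Omega)\) in the small-radius estimate alone, which again gives~\eqref{eq_thaelohloh9UcahsieBei7Ah}. Since the intermediate maximal-function bound is never integrated in \(x\), no maximal function theorem is needed here, and the only real obstacle is the bookkeeping that guarantees the factor \(1/(1-s)\) (rather than a worse dependence on \(sp - 1\)) emerges on the right-hand side—this is precisely what the optimisation in \(\varrho\) is designed to produce.
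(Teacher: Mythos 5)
Your proposal is correct and is essentially the paper's own argument: the paper likewise obtains \cref{proposition_local_GN_Wsp_W1p_BMO} by observing that the proof of \cref{lemma_BMO_log} already gives the bound with \(f^\sharp(x)\) in place of \(\norm{f}_{\BMO(\Omega)}\) (all balls being centred at \(x\)), and then rerunning the proof of \cref{proposition_GN_Wsp_W1p_BMO} up to the pointwise estimate \eqref{eq_oophae8ocee6Ohphoh4foox8}, stopping before the final integration and maximal function theorem. Your choice of \(\varrho\) and the two-case optimisation match the paper exactly, and you correctly note that the \(1/(sp-1)\) loss only enters through the maximal function theorem, which is not needed for the pointwise statement.
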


\Cref{proposition_local_GN_Wsp_W1p_BMO} is stronger than \cref{proposition_GN_Wsp_W1p_BMO} in the sense that the integration of the estimate \eqref{eq_thaelohloh9UcahsieBei7Ah} yields \eqref{eq_aijei2Xaighei4obeichiiKi}.

\Cref{proposition_local_GN_Wsp_W1p_BMO} is a counterpart of the interpolation involving maximal and sharp maximal function of derivatives \citelist{\cite{Lokharu_2011}*{(4)}}, which generalised a priori estimates in terms of maximal functions \citelist{\cite{Mazya_Shaposhnikova_1999}*{theorem 1}\cite{Kalamajska_1994}}; \cref{proposition_local_GN_Wsp_W1p_BMO} generalises the corresponding result for integer-order Sobolev spaces \citelist{\cite{Miyazaki_2020}*{remark 2.2}}.

\section{Interpolation between first-order Sobolev semi-norm and mean oscillation}

We explain how the tools of the previous section can be used to prove the fractional BMO Gagliardo--Nirenberg interpolation inequality as persented by Brezis and Mironescu's \citelist{\cite{Brezis_Mironescu_2021}*{lemma 15.7}}.

\begin{theorem}
\label{proposition_GN_Wsp_Wsp_BMO}
For every \(d \in \Nset \setminus \set{0}\), every \(s, s_1 \in \intvo{0}{1}\) and every \(p, p_1 \in (1, +\infty)\) 
satisfying \(s< s_1\) and \(s_1 p_1 = sp\), there exists a constant \(C > 0\) such that for every open convex set \(\Omega \subseteq \Rset^d\) satisfying \(\varkappa (\Omega)<\infty\)  and for every function \(f \in \dot{W}^{s_1, p_1} (\Omega) \cap\BMO(\Omega)\), one has
\(f \in \dot{W}^{s, p} (\Omega)\) and 
  \begin{equation}
  \label{eq_aeshohRe7chuFohyiizieyo4}
  \iint\limits_{\Omega \times \Omega}
  \frac{\abs{f (y) - f (x)}^p}{\abs{y - x}^{d + sp}}
  \dif y
  \dif x
  \le C 
  \norm{f}_{\mathrm{BMO}(\Omega)}^{p - p_1}\varkappa (\Omega)^{p_1}
  \iint\limits_{\Omega \times \Omega}
  \frac{\abs{f (y) - f (x)}^{p_1}}{\abs{y - x}^{d + s_1 p_1}}
  \dif y
  \dif x.
  \end{equation}
\end{theorem}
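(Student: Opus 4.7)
The proof is designed to track the structure of the proof of \cref{proposition_GN_Wsp_W1p_BMO} step by step, simply replacing the Hardy--Littlewood maximal function of $\Deriv f$ by a fractional analogue built directly from the Gagliardo integrand. For almost every $x \in \Omega$, set
\begin{equation*}
  g(x) \defeq \brk*{\int_{\Omega} \frac{\abs{f(y) - f(x)}^{p_1}}{\abs{y-x}^{d + s_1 p_1}} \dif y}^{1/p_1},
\end{equation*}
so that $\int_{\Omega} g^{p_1}$ is precisely the right-hand side Gagliardo integral in \eqref{eq_aeshohRe7chuFohyiizieyo4}.

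The first step is to establish a fractional counterpart of \cref{lemma_osc_MDu}: by Hölder's inequality with weight $\abs{z-x}^{d+s_1 p_1}$ over $\Omega \cap B_r(x)$, a direct computation of the resulting power-weighted ball integral yields, for every $r \in \intvo{0}{\diam (\Omega)}$,
\begin{equation*}
  \fint_{\Omega \cap B_r(x)} \abs{f(z) - f(x)} \dif z \le C \, \varkappa(\Omega) \, r^{s_1} g(x).
\end{equation*}
The point is that $p_1 > 1$ makes the dual exponent finite and the weight integrable, producing the factor $r^{s_1}$ which plays here the role of $r$ in \cref{lemma_osc_MDu}; the factor $\varkappa(\Omega)$ appears exactly as in \cref{lemma_osc_MDu}, from replacing the volume of $\Omega \cap B_r(x)$ by that of $B_r$.

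I would then reproduce verbatim the triangle-inequality reduction \eqref{eq_ieph0eiwoose5IeJooxee1wj} of the proof of \cref{proposition_GN_Wsp_W1p_BMO} to bound the left-hand side of \eqref{eq_aeshohRe7chuFohyiizieyo4} by $C \int_{\Omega} \int_0^{\diam \Omega} \brk*{\fint_{\Omega \cap B_r(x)} \abs{f - f(x)}}^p r^{-1-sp}\dif r \dif x$. Splitting the inner integral at some $\varrho \in \intvo{0}{\diam(\Omega)}$, the previous fractional Hölder estimate handles $r \le \varrho$, while for $r \ge \varrho$ one uses the triangle inequality across the intermediate ball $\Omega \cap B_\varrho(x)$ combined with the fractional Hölder estimate at scale $\varrho$ and with \cref{lemma_BMO_log}. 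Invoking \cref{lemma_Gamma} to integrate the logarithmic BMO term, one reaches
\begin{equation*}
  \int_0^{\diam(\Omega)} \brk*{\fint_{\Omega \cap B_r(x)} \abs{f - f(x)}}^p \frac{\dif r}{r^{1+sp}}
  \le C \brk*{\frac{\varrho^{(s_1-s)p} \brk[\big]{\varkappa(\Omega) g(x)}^p}{s_1 - s} + \frac{\norm{f}_{\BMO(\Omega)}^p}{\varrho^{sp}}}.
\end{equation*}
Optimising by taking $\varrho^{s_1} = \norm{f}_{\BMO(\Omega)}/(\varkappa(\Omega) g(x))$ when this number is at most $\diam (\Omega)$ and $\varrho = \diam(\Omega)$ otherwise, and using the identities $p_1 = sp/s_1$ and $p - p_1 = (s_1 - s)p/s_1$ coming from $s_1 p_1 = sp$, both terms balance to $\norm{f}_{\BMO(\Omega)}^{p - p_1}\brk[\big]{\varkappa(\Omega) g(x)}^{p_1}$.

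The conclusion then follows simply by integrating in $x$, since the resulting $\int_{\Omega} g(x)^{p_1} \dif x$ is exactly the Gagliardo integral on the right-hand side of \eqref{eq_aeshohRe7chuFohyiizieyo4}. I expect no real obstacle beyond bookkeeping: notably, unlike in \cref{proposition_GN_Wsp_W1p_BMO}, the Hardy--Littlewood maximal function theorem is \emph{not} needed here, because $g$ enters the final bound with exponent $p_1$ and its $L^{p_1}$ norm \emph{is} the fractional Gagliardo semi-norm. The delicate point to check carefully is the bookkeeping of the exponents under $s_1 p_1 = sp$ and the non-balanced case $\varrho = \diam (\Omega)$, which does reproduce the balanced bound thanks precisely to that exponent identity.
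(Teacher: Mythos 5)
Your proposal is correct and follows essentially the same route as the paper: your ``fractional counterpart of \cref{lemma_osc_MDu}'' is exactly the paper's \cref{lemma_osc_Wsp}, and the splitting at $\varrho$, the use of \cref{lemma_BMO_log} and \cref{lemma_Gamma}, the balancing choice $\varrho^{s_1}\varkappa(\Omega)g(x)=\norm{f}_{\BMO(\Omega)}$ (with the fallback $\varrho=\diam(\Omega)$), and the final integration in $x$ all match the paper's argument. Your observation that the maximal function theorem is not needed here, because $\int_\Omega g^{p_1}$ is already the Gagliardo semi-norm, is also precisely the simplification the paper exploits.
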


The proof of \cref{proposition_GN_Wsp_Wsp_BMO} will follow essentially the proof of \cref{proposition_GN_Wsp_W1p_BMO}, the main difference being the replacement of \cref{lemma_osc_MDu} by its easier fractional counterpart.

\begin{lemma}
\label{lemma_osc_Wsp}
For every \(p \in \intvo{1}{\infty}\), there exists a constant \(C > 0\) such that if the set \(\Omega \subseteq \Rset^d\) is open and convex, if \(s \in \intvo{0}{1}\) and
if \(f : \Omega \to \Rset\) is measurable, then for every \(r \in \intvo{0}{\diam(\Omega)}\) and every \(x \in \Omega\), 
\begin{equation}
\label{eq_jaC2Aedee8Pheefashiphuiw}
 \fint_{\Omega \cap B_r(x)} \abs{f - f (x)} 
 \le C \varkappa (\Omega) \, r^s \brk*{\int_{\Omega} \frac{\abs{f (y) - f (x)}^p}{\abs{y - x}^{d + sp}} \dif y}^\frac{1}{p}.
\end{equation}
\end{lemma}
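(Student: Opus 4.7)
The plan is to apply Hölder's inequality after splitting the integrand in a way that makes the fractional weight appear, then integrate out the $p'$-side explicitly using polar coordinates, and finally convert the ball measure to an averaged ball measure via the definition of $\varkappa(\Omega)$.

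Concretely, I write
\begin{equation*}
\abs{f (y) - f (x)} = \frac{\abs{f (y) - f (x)}}{\abs{y - x}^{d/p + s}} \cdot \abs{y - x}^{d/p + s}
\end{equation*}
and apply Hölder's inequality with exponents $p$ and $p' = p/(p-1)$ to the integral of this product over $\Omega \cap B_r(x)$. This yields
\begin{equation*}
\int_{\Omega \cap B_r(x)} \abs{f(y) - f(x)} \dif y
\le \brk*{\int_{\Omega} \frac{\abs{f(y) - f(x)}^p}{\abs{y-x}^{d+sp}} \dif y}^{1/p}
\brk*{\int_{B_r(x)} \abs{y-x}^{(d+sp)/(p-1)} \dif y}^{(p-1)/p},
\end{equation*}
where I enlarged the first integral's domain to $\Omega$ and the second's to $B_r(x)$, which is legitimate since the second integrand is nonnegative.

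Next I evaluate the Hölder factor by passing to polar coordinates: the exponent reorganises so that $\int_0^r \rho^{(d+sp)/(p-1)+d-1}\dif \rho$ has antiderivative proportional to $\rho^{p(s+d)/(p-1)}$, and consequently the whole factor is bounded by $C_{p,d}(s+d)^{-(p-1)/p} r^{s+d}$, where the prefactor is uniformly bounded for $s \in \intvo{0}{1}$. Dividing by $\mathcal{L}^d(\Omega \cap B_r(x))$ produces the average, and from \eqref{eq_waiC3theiwooDaithee1aeLa} I use the inequality $\mathcal{L}^d(\Omega \cap B_r(x)) \ge \mathcal{L}^d(B_r(x))/\varkappa(\Omega)$ to convert the excess factor $r^{s+d}$ into $\varkappa(\Omega) \, r^s$, giving \eqref{eq_jaC2Aedee8Pheefashiphuiw}.

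There is essentially no obstacle here: the argument is a clean Hölder split, and the only detail to keep track of is that the elementary integral produces $r^{s+d}$, with the factor $d$ precisely cancelling the volume denominator modulo the $\varkappa(\Omega)$ correction. The condition $p > 1$ is used only to allow Hölder's inequality; the condition $s \in \intvo{0}{1}$ is used only to keep the constant $C_{p,d}$ uniform, since $1/(s+d)$ is bounded by $1/d$.
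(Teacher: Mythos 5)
Your proof is correct and follows essentially the same route as the paper: the same Hölder split with exponents $p$ and $p/(p-1)$, the same explicit evaluation of the factor $\int_{B_r(x)}\abs{y-x}^{(d+sp)/(p-1)}\dif y \le C\brk[\big]{r^{s}\mathcal{L}^d(B_r(x))}^{p/(p-1)}$, and the same use of the definition of $\varkappa(\Omega)$ to pass from $\mathcal{L}^d(B_r(x))$ to $\mathcal{L}^d(\Omega\cap B_r(x))$. Nothing to add.
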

\begin{proof}
By Hölder's inequality we have for every \(r \in \intvo{0}{\diam(\Omega)}\) and for every \(x \in \Omega\), 
\begin{equation}
 \int_{\Omega \cap B_r(x)} \abs{f - f (x)} 
 \le \brk*{\int_{\Omega} \frac{\abs{f (y) - f (x)}^p}{\abs{y - x}^{d + sp}} \dif y}^\frac{1}{p} \brk*{\int_{B_r (x)} \abs{y - x}^{\frac{d + sp}{p - 1}} \dif y}^{1 - \frac{1}{p}}.
\end{equation}
Noting that 
\begin{equation}
\int_{B_r (x)} \abs{y - x}^\frac{d + sp}{p - 1} \dif y
 = \C \frac{p - 1}{d + s p} \brk[\big]{r^s \mathcal{L}^d (B_r (x)) }^\frac{p}{p - 1}
 \le \C  \brk[\big]{r^s \mathcal{L}^d (B_r (x)) }^\frac{p}{p - 1},
\end{equation}
we reach the conclusion \eqref{eq_jaC2Aedee8Pheefashiphuiw} thanks to the definition of the geometric quantity \(\varkappa (\Omega)\) in \eqref{eq_waiC3theiwooDaithee1aeLa}.
\end{proof}

\begin{proof}[Proof of \cref{proposition_GN_Wsp_Wsp_BMO}]
We begin as in the proof of \cref{proposition_GN_Wsp_W1p_BMO}. Instead of \eqref{eq_pheiph4dei2uc0iefo5KaR0j}, we have by \cref{lemma_osc_Wsp},
\begin{equation}
\label{eq_ohThaequi9eiThoh0aidoh4a}
\begin{split}
 \int_0^\varrho \brk*{\fint_{\Omega \cap B_r(x)} \abs{f - f (x)}}^p &\frac{\dif r}{r^{1 + sp}}\\
 &\le \C \varkappa(\Omega)^p
 \brk*{\int_{\Omega} \frac{\abs{f (y) - f (x)}^{p_1}}{\abs{y - x}^{d + s_1p_1}} \dif y}^\frac{p}{p_1}
 \int_0^\varrho r^{(s_1 -s) p - 1} \dif r\\
 & \le \frac{\C \varkappa(\Omega)^p \varrho^{(s_1 - s)p}}{(s_1 -s) p} \brk*{\int_{\Omega} \frac{\abs{f (y) - f (x)}^{p_1}}{\abs{y - x}^{d + s_1p_1}} \dif y}^\frac{p}{p_1}.
 \end{split}
\end{equation} 
Next instead of \eqref{eq_Tah3Loosiela3aQuoothio2p}, we have 
\begin{equation}
\label{eq_ahYeipe1quaiZ3lei8voh4as}
\begin{split}
 \int_\varrho^{\diam (\Omega)} &\brk*{\fint_{\Omega \cap B_r(x)} \abs{f - f (x)}}^p \frac{\dif r}{r^{1 + sp}}\\
 &\le \C \brk*{\frac{\varkappa(\Omega)^p \varrho^{(s_1 - s)p} }{sp} \brk*{\int_{\Omega} \frac{\abs{f (y) - f (x)}^{p_1}}{\abs{y - x}^{d + s_1p_1}} \dif y}^\frac{p}{p_1}  + \frac{\norm{f}_{\BMO(\Omega)}^p}{(sp)^{p + 1} \varrho^{sp}}}.
 \end{split}
\end{equation}
Taking \(\varrho \in \intvo{0}{\diam (\Omega)}\) such that  
\begin{equation}
\label{eq_Iel5aigheeng3geeghiqu8ge}
 \norm{f}_{\mathrm{BMO}(\Omega)}^p = \varrho^{s_1 p}\varkappa(\Omega)^p \brk*{\int_{\Omega} \frac{\abs{f (y) - f (x)}^{p_1}}{\abs{y - x}^{d + s_1p_1}} \dif y}^\frac{p}{p_1}
\end{equation}
if possible, and otherwise taking \(\varrho \defeq \diam (\Omega)\),
we obtain, since \(s_1 p_1 = sp\), by \eqref{eq_ohThaequi9eiThoh0aidoh4a}, \eqref{eq_ahYeipe1quaiZ3lei8voh4as} and \eqref{eq_Iel5aigheeng3geeghiqu8ge}
\begin{equation}
\label{eq_ohng5eeRas3Ohcithoociit4}
\begin{split}
 \int_0^{\diam \Omega} \brk*{\fint_{\Omega \cap B_r(x)} \abs{f - f (x)}}^p &\frac{\dif r}{r^{1 + sp}}\\[-1ex]
 &\le \C \norm{f}_{\BMO(\Omega)}^{p - p_1} \varkappa(\Omega)^{p_1} 
 \int_{\Omega} \frac{\abs{f (y) - f (x)}^{p_1}}{\abs{y - x}^{d + s_1p_1}} \dif y.
 \end{split}
\end{equation}
We conclude by integration of \eqref{eq_ohng5eeRas3Ohcithoociit4}.
\end{proof}

As previously, we point out that the estimate \eqref{eq_ohng5eeRas3Ohcithoociit4} admits a localised version, which is the fractional counterpart of \cref{proposition_local_GN_Wsp_W1p_BMO}.

\begin{proposition}
For every \(d \in \Nset \setminus \set{0}\), every \(s, s_1 \in \intvo{0}{1}\) and every \(p, p_1 \in (1, +\infty)\) 
satisfying \(s< s_1\) and \(s_1 p_1 = sp\), there exists a constant \(C > 0\) such that for every open convex set \(\Omega \subseteq \Rset^d\) satisfying \(\varkappa (\Omega)<\infty\), for every measurable function \(f : \Omega \to \Rset\) and for every \(x \in \Omega\),
\begin{equation}
\label{eq_ael6ohf6rooK4shaiMe0thai}
\int_0^{\diam (\Omega)} \brk*{\fint_{\Omega \cap B_r(x)} \abs{f - f (x)}}^p \frac{\dif r}{r^{1 + sp}}\le C  \brk[\big]{f^\sharp(x)}^{p - p_1}\varkappa (\Omega)^{p_1}
 \int_{\Omega} \frac{\abs{f (y) - f (x)}^{p_1}}{\abs{y - x}^{d + s_1p_1}} \dif y.
\end{equation}
\end{proposition}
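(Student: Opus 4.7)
The plan is to mirror the proof of \cref{proposition_GN_Wsp_Wsp_BMO} verbatim, stopping short of the final integration over \(x\), and systematically replacing \(\norm{f}_{\BMO(\Omega)}\) by \(f^\sharp(x)\). This replacement is legitimate because all the averages that appear when \cref{lemma_BMO_log} is invoked in the proof of \cref{proposition_GN_Wsp_Wsp_BMO} are concentric at the base point \(x\); as pointed out in the discussion preceding \cref{proposition_local_GN_Wsp_W1p_BMO}, the argument proving \cref{lemma_BMO_log} actually yields the pointwise sharper estimate
\[
  \fint_{\Omega \cap B_{r_0}(x)} \fint_{\Omega \cap B_{r_1} (x)}
  \abs{f (y) - f (z)}\dif y \dif z
  \le  e \brk[\big]{1 + d \ln \brk*{r_1/r_0}} f^\sharp (x).
\]

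More concretely, I fix \(x \in \Omega\) and a threshold \(\varrho \in \intvo{0}{\diam(\Omega)}\) and split the integral over \(r\) at \(\varrho\). On the small-scale piece \(\intvo{0}{\varrho}\), \cref{lemma_osc_Wsp} already is a pointwise estimate and produces exactly the bound \eqref{eq_ohThaequi9eiThoh0aidoh4a}, with no BMO quantity involved. On the large-scale piece \(\intvo{\varrho}{\diam(\Omega)}\), I use \cref{lemma_triangle} to decompose
\[
  \fint_{\Omega \cap B_r(x)} \abs{f - f(x)}
  \le \fint_{\Omega \cap B_\varrho(x)} \abs{f - f(x)}
  + \fint_{\Omega \cap B_r(x)} \fint_{\Omega \cap B_\varrho(x)} \abs{f(y) - f(z)} \dif y \dif z,
\]
controlling the first summand by \cref{lemma_osc_Wsp} applied at radius \(\varrho\), and the second by the pointwise variant of \cref{lemma_BMO_log} recalled above, yielding the factor \(e(1 + d \ln(r/\varrho))\, f^\sharp(x)\). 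Raising to the \(p\)-th power, dividing by \(r^{1+sp}\), and integrating with the help of \cref{lemma_Gamma} gives the pointwise counterpart of \eqref{eq_ahYeipe1quaiZ3lei8voh4as} with \(f^\sharp(x)\) in place of \(\norm{f}_{\BMO(\Omega)}\).

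It then remains to balance the two contributions in \(\varrho\): if the value determined by the pointwise analogue of \eqref{eq_Iel5aigheeng3geeghiqu8ge},
\[
  \brk[\big]{f^\sharp(x)}^p = \varrho^{s_1 p}\, \varkappa(\Omega)^p \brk*{\int_{\Omega} \frac{\abs{f (y) - f (x)}^{p_1}}{\abs{y - x}^{d + s_1 p_1}} \dif y}^{p/p_1},
\]
lies in \(\intvo{0}{\diam(\Omega)}\), I substitute it; otherwise I take \(\varrho \defeq \diam(\Omega)\) and keep only the small-scale contribution, noting that in that case the BMO term drops out. Exploiting the scaling relation \(s_1 p_1 = sp\) exactly reproduces the right-hand side of \eqref{eq_ael6ohf6rooK4shaiMe0thai}. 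I do not anticipate a real obstacle: the only point worth verifying is that every invocation of \cref{lemma_BMO_log} in the original proof is on averages concentric at \(x\), which is transparent from the proof of \cref{proposition_GN_Wsp_Wsp_BMO}.
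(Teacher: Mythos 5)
Your proof is correct and follows exactly the route the paper intends: replay the proof of \cref{proposition_GN_Wsp_Wsp_BMO} at the pointwise level in \(x\), substituting \(f^\sharp(x)\) for \(\norm{f}_{\BMO(\Omega)}\) wherever \cref{lemma_BMO_log} is invoked, which is legitimate since all the averages appearing there are over balls centred at \(x\). Your treatment of the choice of \(\varrho\), including the boundary case \(\varrho = \diam(\Omega)\) where only the small-scale term survives, matches the paper's argument.
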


The estimate \eqref{eq_aeshohRe7chuFohyiizieyo4} can be seen as a consequence of the integration of \eqref{eq_ael6ohf6rooK4shaiMe0thai}.

\section{Higher-order fractional spaces estimates}

The last ingredient to obtain the full scale of Gagliardo--Nirenberg interpolation inequalities between fractional Sobolev--Slobodeckiĭ spaces and the bounded mean oscillation space is the following estimate.

\begin{theorem}
\label{proposition_higher_order}
For every \(d \in \Nset \setminus \set{0}\), every \(k_1 \in \Nset \setminus\set{0}\), every \(\sigma_1 \in \intvo{0}{1}\) and every \(p, p_1 \in \intvo{1}{\infty}\) satisfying
\begin{equation}
\label{eq_Yoxaikeengah4deil3Aingae}
k_1p = (k_1 + \sigma_1)p_1,
\end{equation}
there exists a constant \(C > 0\) such that for every function \(f \in \dot{W}^{k_1 + \sigma_1, p_1} (\Rset^d)\cap \mathrm{BMO} (\Rset^d)\), one has \(f \in \dot{W}^{k_1, p} (\Rset^d)\) and 
\begin{equation}
\label{eq_ohjooYai3xeijo1aiphiivie} 
\int_{\Rset^d} \abs{D^{k_1} f}^p
\le C \norm{f}_{\mathrm{BMO} (\Rset^d)}^{p - p_1} 
\smashoperator{\iint_{\Rset^d \times \Rset^d}} \frac{\abs{D^{k_1} f(x) - D^{k_1} f(y)}^{p_1}}{\abs{x - y}^{d + \sigma_1p_1}} \dif x \dif y.
\end{equation}
\end{theorem}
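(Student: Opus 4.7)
The strategy is to establish a pointwise estimate for \(\abs{D^{k_1} f(x)}\) at almost every \(x \in \Rset^d\) that, after raising to the power \(p\) and integrating, yields \eqref{eq_ohjooYai3xeijo1aiphiivie}. Fix a mollifier \(\phi \in C_c^\infty(B_1(0))\) with \(\int_{\Rset^d} \phi = 1\) and for \(r > 0\) set \(\phi_r(y) \defeq r^{-d} \phi(y/r)\). At each point \(x\) I decompose
\[
D^{k_1} f(x) = \phi_r * D^{k_1} f(x) + \bigl(D^{k_1} f(x) - \phi_r * D^{k_1} f(x)\bigr)
\]
and estimate each piece using a different endpoint norm.

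For the \emph{remainder term}, the identity \(\int \phi_r = 1\) gives \(D^{k_1} f(x) - \phi_r * D^{k_1} f(x) = \int \phi_r(y) \bigl(D^{k_1} f(x) - D^{k_1} f(x - y)\bigr) \dif y\), and H\"older's inequality with the weight \(\abs{y}^{(d + \sigma_1 p_1)/p_1}\), combined with the change of variables \(y = r z\), yields
\[
\abs[\big]{D^{k_1} f(x) - \phi_r * D^{k_1} f(x)} \le C\, r^{\sigma_1}\, \widehat{G}(x), \qquad \widehat{G}(x) \defeq \brk*{\int_{\Rset^d} \frac{\abs{D^{k_1} f(x) - D^{k_1} f(y)}^{p_1}}{\abs{x - y}^{d + \sigma_1 p_1}} \dif y}^{1/p_1}.
\]
For the \emph{smoothed term}, the cancellation \(\int D^{k_1} \phi_r = 0\) allows me to write, for any constant \(c\), \(\phi_r * D^{k_1} f(x) = D^{k_1} \phi_r * f(x) = \int D^{k_1} \phi_r(x - y) (f(y) - c) \dif y\); using \(\norm{D^{k_1} \phi_r}_{L^\infty} \le C\, r^{-d-k_1}\) and \(\supp D^{k_1} \phi_r \subseteq B_r(0)\), the choice \(c \defeq \fint_{B_r(x)} f\) together with the definition of the \(\BMO\) semi-norm gives
\[
\abs[\big]{\phi_r * D^{k_1} f(x)} \le C\, r^{-k_1} \fint_{B_r(x)} \abs{f - c} \le C\, r^{-k_1} \norm{f}_{\BMO(\Rset^d)}.
\]

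Adding the two bounds and optimizing in \(r > 0\) (by balancing, i.e.\ setting \(r^{k_1 + \sigma_1} \sim \norm{f}_{\BMO(\Rset^d)} / \widehat{G}(x)\)) yields the pointwise estimate
\[
\abs{D^{k_1} f(x)}^p \le C\, \norm{f}_{\BMO(\Rset^d)}^{p \sigma_1/(k_1 + \sigma_1)}\, \widehat{G}(x)^{p k_1/(k_1 + \sigma_1)} = C\, \norm{f}_{\BMO(\Rset^d)}^{p - p_1}\, \widehat{G}(x)^{p_1},
\]
where the last equality invokes the scaling relation \eqref{eq_Yoxaikeengah4deil3Aingae}, which forces \(p k_1/(k_1 + \sigma_1) = p_1\) and \(p \sigma_1/(k_1 + \sigma_1) = p - p_1\). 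Integrating over \(x \in \Rset^d\) and applying Fubini's theorem to \(\int_{\Rset^d} \widehat{G}(x)^{p_1} \dif x\) then gives \eqref{eq_ohjooYai3xeijo1aiphiivie}.

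The main obstacle is essentially technical: making the pointwise decomposition rigorous requires \(D^{k_1} f\) to be defined a.e.\ (which holds since \(f\) has weak derivatives of order \(k_1\) by assumption and \(\BMO(\Rset^d) \subseteq L^1_{\mathrm{loc}}(\Rset^d)\)), and this is cleanly handled by proving \eqref{eq_ohjooYai3xeijo1aiphiivie} first for the mollification \(\rho_\epsilon * f\) with a standard mollifier \(\rho_\epsilon\) and then passing to the limit \(\epsilon \to 0^+\) via Fatou's lemma on the left-hand side. In contrast to \cref{proposition_GN_Wsp_W1p_BMO}, no logarithmic \(\BMO\) estimate is required here because optimization over a single well-chosen scale already suffices.
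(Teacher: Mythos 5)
Your proposal is correct and takes essentially the same route as the paper: the identical mollifier decomposition of \(D^{k_1}f(x)\), the same H\"older estimate of the oscillation term against the Gagliardo integrand giving the factor \(r^{\sigma_1}\), the same use of \(\int D^{k_1}\phi_r = 0\) to bound the smoothed term by \(C r^{-k_1}\norm{f}_{\BMO(\Rset^d)}\), and the same optimization in \(r\) followed by integration. The only cosmetic difference is that you subtract the ball average of \(f\) where the paper inserts a second copy of the mollifier to produce the double-average form of the BMO semi-norm; these are equivalent up to constants.
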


As a consequence of \cref{proposition_higher_order}, we have that \(f \in \dot{W}^{k + \sigma, p}(\Rset^d)\) whenever \(k \in \Nset\), \(\sigma \in \intvr{0}{1}\) and \(p \in \intvo{1}{\infty}\) satisfy \(k + \sigma < k_1 + \sigma_1\) and 
\((k + \sigma)p = (k_1 + \sigma_1)p_1\).
Indeed for \(\sigma = 0\) and \(k = k_1\), this follows from \cref{proposition_higher_order} and then for \(k \in \set{1, \dotsc, k_1 - 1}\) by the Gagliardo--Nirenberg interpolation inequality for integer-order Sobolev space \citelist{\cite{Strzelecki_2006}\cite{Miyazaki_2020}}; 
for \(0 < \sigma < 1\) and \(k = 0\) one then uses \cref{proposition_GN_Wsp_W1p_BMO} whereas for \(0 < \sigma < 1\) and \(k \in \Nset \setminus \set{0}\) one uses the classical fractional Gagliardo--Nirenberg interpolation inequality \cite{Brezis_Mironescu_2018}.

\begin{proof}[Proof of \cref{proposition_higher_order}]
Fixing a function \(\eta \in C^\infty_c (\Rset^d)\) such that \(\int_{\Rset^d} \eta = 1\) and \(\supp \eta \subseteq B_1\), we have for every \(x \in \Rset^d\) and every \(\varrho \in \intvo{0}{\infty}\),
\begin{equation}
\label{eq_iemai1quuyeiSh2eebo9aiw8}
 D^{k_1} f (x)
= \frac{1}{\varrho^d} \int_{\Rset^d} \eta\brk*{\tfrac{x - y}{\varrho}} \brk[\big]{D^{k_1} f (x) - D^{k_1} f (y)} \dif y + \frac{1}{\varrho^d} \int_{\Rset^d} \eta\brk*{\tfrac{x - y}{\varrho}} D^{k_1} f (y) \dif y .
\end{equation}
We estimate the first term in the right-hand side of \eqref{eq_iemai1quuyeiSh2eebo9aiw8} by Hölder's inequality
\begin{equation}
\label{eq_iumooxoh6ohhePijahw6leev}
\begin{split}
 \abs[\bigg]{\frac{1}{\varrho^d} \int_{\Rset^d}& \eta\brk*{\tfrac{x - y}{\varrho}} \brk[\big]{D^{k_1} f (x) - D^{k_1} f (y)} \dif y}\\
 &\le \frac{\C}{\varrho^d} \brk*{\int_{\Rset^d} \frac{\abs{D^{k_1} f (x) - D^{k_1} f (y)}^{p_1}}{\abs{x - y}^{d+\sigma_1 p_1}}\dif x}^\frac{1}{p_1}
 \brk*{\int_{B_\varrho(x)} \abs{x - y}^{\frac{d+\sigma_1 p_1}{p_1 - 1}}\dif x}^{1 - \frac{1}{p_1}}\\
 &\le \C \varrho^{\sigma_1} \brk*{\int_{\Rset^d} \frac{\abs{D^{k_1} f (x) - D^{k_1} f (y)}^{p_1}}{\abs{x - y}^{d+\sigma_1 p_1}}\dif x}^\frac{1}{p_1}.
\end{split}
\end{equation}
For the second-term in the right-hand side of \eqref{eq_iemai1quuyeiSh2eebo9aiw8}, for every \(x \in \Rset^d\), we have by weak differentiability, 
\begin{equation}
\label{eq_Shai9yauPh6Ohph3phei2Uic}
\begin{split}
\frac{1}{\varrho^d} \int_{\Rset^d} \eta\brk*{\tfrac{x - y}{\varrho}} D^{k_1} f (y) \dif y
 &= \frac{1}{\varrho^{d + k_1}} \int_{\Rset^d} D^{k_1} \eta\brk*{\tfrac{x - y}{\varrho}} f (y) \dif y\\
 &= \frac{1}{\varrho^{2 d + k_1}} \int_{\Rset^d} \int_{\Rset^d} D^{k_1} \eta\brk*{\tfrac{x - y}{\varrho}} \eta\brk*{\tfrac{x - z}{\varrho}} \brk[\big]{f (y) - f (z)} \dif y \dif z,
\end{split}
\end{equation}
and thus by \eqref{eq_Shai9yauPh6Ohph3phei2Uic} and by definition of bounded mean oscillation \eqref{eq_oothaKahp8pe7OGheix4phoo}, we have
\begin{equation}
\label{eq_egai1Isae4vie5opeida5le8}
 \abs*{\frac{1}{\varrho^d} \int_{\Rset^d} \eta\brk*{\tfrac{x - y}{\varrho}} D^{k_1} f (y) \dif y}
 \le \frac{\C}{\varrho^{k_1}} \norm{f}_{\mathrm{BMO}(\Rset^d)}. 
\end{equation}
Choosing \(\varrho \in \intvo{0}{\infty}\) such that 
\begin{equation}
 \varrho^{k_1 + \sigma_1} \brk*{\int_{\Rset^d} \frac{\abs{D^{k_1} f (x) - D^{k_1} f (y)}^{p_1}}{\abs{x - y}^{d+\sigma_1 p_1}}\dif y}^\frac{1}{p_1} = \norm{f}_{\mathrm{BMO}(\Rset^d)},
\end{equation}
we get from \eqref{eq_iemai1quuyeiSh2eebo9aiw8}, \eqref{eq_iumooxoh6ohhePijahw6leev} and \eqref{eq_egai1Isae4vie5opeida5le8}, for every \(x \in \Rset^d\),
\begin{equation}
 \abs{D^{k_1} f (x)}
 \le \C \norm{f}_{\mathrm{BMO}(\Rset^d)}^{1 - \frac{k_1}{k_1 + \sigma_1}} \brk*{\int_{\Rset^d} \frac{\abs{D^{k_1} f (x) - D^{k_1} f (y)}^{p_1}}{\abs{x - y}^{d+\sigma_1 p_1}}\dif y}^\frac{k_1}{(k_1 + \sigma_1) p_1}
 ,
\end{equation}
and thus in view of the condition \eqref{eq_Yoxaikeengah4deil3Aingae}, the estimate \eqref{eq_ohjooYai3xeijo1aiphiivie} follows by integration.
\end{proof}

\Cref{proposition_higher_order} also admits a localised version involving the sharp maximal function which follows from the replacement of \(\norm{f}_{\mathrm{BMO}(\Rset^d)}\) by \(f^\sharp (x)\) in \eqref{eq_egai1Isae4vie5opeida5le8}.

\begin{proposition}
\label{proposition_higher_order_local}
For every \(d \in \Nset \setminus \set{0}\), every \(k_1 \in \Nset \setminus\set{0}\), every \(\sigma_1 \in \intvo{0}{1}\) and every \(p_1 \in \intvo{1}{\infty}\),
there exists a constant \(C>0\) such that for every function \(f \in \dot{W}^{k_1, 1}_{\mathrm{loc}} (\Rset^d)\) and every \(x \in \Rset^d\), 
\begin{equation}
\label{eq_jaeliu1aeph2tooTh8Wul5ox}
\abs{D^{k_1} f (x)}
 \le C \brk[\big]{f^\sharp (x)}^{1 - \frac{k_1}{k_1 + \sigma_1}} \brk*{\int_{\Rset^d} \frac{\abs{D^{k_1} f (x) - D^{k_1} f (y)}^{p_1}}{\abs{x - y}^{d+\sigma_1 p_1}}\dif y}^\frac{k_1}{(k_1 + \sigma_1) p_1}.
\end{equation}
\end{proposition}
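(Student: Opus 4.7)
The plan is to follow the proof of \cref{proposition_higher_order} line by line, tracking the one place where the BMO seminorm enters, and replacing it by the sharp maximal function evaluated at the point $x$. I would fix the same auxiliary function $\eta \in C^\infty_c(\Rset^d)$ with $\int_{\Rset^d} \eta = 1$ and $\supp \eta \subseteq B_1$, and, for each $x \in \Rset^d$ and each $\varrho \in \intvo{0}{\infty}$, use the same decomposition \eqref{eq_iemai1quuyeiSh2eebo9aiw8} to split $D^{k_1} f(x)$ into a ``fractional'' piece and an ``averaged'' piece. This identity is purely algebraic (it uses only $\int \eta = 1$) and the weak differentiability of $f$ is only needed locally, so the hypothesis $f \in \dot{W}^{k_1, 1}_{\mathrm{loc}}(\Rset^d)$ is enough.

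The fractional piece is controlled exactly by the Hölder estimate \eqref{eq_iumooxoh6ohhePijahw6leev}, which is already pointwise in $x$ and which depends only on $\eta$, $p_1$, $\sigma_1$, $d$; it does not see the BMO norm at all, so it carries over verbatim. The averaged piece is the only place where the argument has to be localized: the integration by parts identity \eqref{eq_Shai9yauPh6Ohph3phei2Uic} rewrites it as
\[
\frac{1}{\varrho^{2d + k_1}} \int_{\Rset^d} \int_{\Rset^d} D^{k_1} \eta\brk*{\tfrac{x - y}{\varrho}} \eta\brk*{\tfrac{x - z}{\varrho}} \brk[\big]{f(y) - f(z)} \dif y \dif z,
\]
and here the crucial observation is that the integrand is supported in $B_\varrho(x) \times B_\varrho(x)$. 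Bounding $|D^{k_1}\eta|$ and $|\eta|$ by constants depending only on $\eta$, $k_1$ and $d$, and using the definition of $f^\sharp(x)$ on the ball $B_\varrho(x)$, I obtain the pointwise replacement for \eqref{eq_egai1Isae4vie5opeida5le8},
\[
 \abs*{\frac{1}{\varrho^d} \int_{\Rset^d} \eta\brk*{\tfrac{x - y}{\varrho}} D^{k_1} f(y) \dif y}
 \le \frac{C}{\varrho^{k_1}}\, f^\sharp(x).
\]

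Finally I would optimize $\varrho$ as in the proof of \cref{proposition_higher_order}, choosing it so that
\[
 \varrho^{k_1 + \sigma_1} \brk*{\int_{\Rset^d} \frac{\abs{D^{k_1} f(x) - D^{k_1} f(y)}^{p_1}}{\abs{x - y}^{d + \sigma_1 p_1}} \dif y}^{\frac{1}{p_1}} = f^\sharp(x),
\]
which balances the two estimates and yields \eqref{eq_jaeliu1aeph2tooTh8Wul5ox} after a routine exponent computation using $k_1/(k_1 + \sigma_1) + \sigma_1/(k_1 + \sigma_1) = 1$. The main, and only, conceptual point is the support observation for $D^{k_1}\eta((x-\cdot)/\varrho) \cdot \eta((x-\cdot)/\varrho)$, which is what lets us trade the global BMO seminorm for the pointwise quantity $f^\sharp(x)$; everything else is a direct transcription of the previous proof, and no integrability of $f$ or of its derivatives outside an arbitrary neighbourhood of $x$ is used.
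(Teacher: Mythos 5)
Your proposal is correct and coincides with the paper's own argument: the paper proves \cref{proposition_higher_order_local} precisely by rerunning the proof of \cref{proposition_higher_order} and replacing \(\norm{f}_{\mathrm{BMO}(\Rset^d)}\) by \(f^\sharp(x)\) in \eqref{eq_egai1Isae4vie5opeida5le8}, which is legitimate for exactly the reason you give, namely that the double integral in \eqref{eq_Shai9yauPh6Ohph3phei2Uic} is supported in \(B_\varrho(x)\times B_\varrho(x)\). The subsequent optimisation in \(\varrho\) is identical to the paper's.
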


As previously, the integration of \eqref{eq_jaeliu1aeph2tooTh8Wul5ox} yields \eqref{eq_ohjooYai3xeijo1aiphiivie}.
\begin{bibdiv}

\begin{biblist}

\bib{Acerbi_Fusco_1984}{article}{
    author={Acerbi, Emilio},
    author={Fusco, Nicola},
    title={Semicontinuity problems in the calculus of variations},
    journal={Arch. Rational Mech. Anal.},
    volume={86},
    date={1984},
    number={2},
    pages={125--145},
    issn={0003-9527},
    doi={10.1007/BF00275731},
}

\bib{Adams_Frazier_1992}{article}{
   author={Adams, David R.},
   author={Frazier, Michael},
   title={Composition operators on potential spaces},
   journal={Proc. Amer. Math. Soc.},
   volume={114},
   date={1992},
   number={1},
   pages={155--165},
   issn={0002-9939},
   doi={10.2307/2159794},
}

\bib{Bojarski_1990}{article}{
    author={Bojarski, Bogdan},
    title={Remarks on some geometric properties of Sobolev mappings},
%
    book={      title={Functional analysis \& related topics},
    editor={Koshi, Shozo},
    publisher={World Sci. Publ.}, 
    address={River Edge, N.J.},
    },
    date={1991},
    pages={65--76},
}
\bib{Brezis_Mironescu_2001}{article}{
    author={Brezis, Ha\"{\i}m},
    author={Mironescu, Petru},
    title={Gagliardo--Nirenberg, composition and products in fractional Sobolev spaces},
    journal={J. Evol. Equ.},
    volume={1},
    date={2001},
    number={4},
    pages={387--404},
    issn={1424-3199},
    doi={10.1007/PL00001378},
}

\bib{Brezis_Mironescu_2018}{article}{
    author={Brezis, Ha\"{\i}m},
    author={Mironescu, Petru},
    title={Gagliardo--Nirenberg inequalities and non-inequalities: the full story},
    journal={Ann. Inst. H. Poincar\'{e} Anal. Non Lin\'{e}aire},
    volume={35},
    date={2018},
    number={5},
    pages={1355--1376},
    issn={0294-1449},
    doi={10.1016/j.anihpc.2017.11.007},
}

\bib{Brezis_Mironescu_2021}{book}{
    author={Brezis, Ha\"{\i}m},
    author={Mironescu, Petru},
    title={Sobolev maps to the circle},
    subtitle={From the perspective of analysis, geometry, and topology},
    series={Progress in Nonlinear Differential Equations and their Applications},
    date={2021},
    volume={96},    
    isbn={978-1-0716-1510-2},
}

\bib{Carleson_1976}{article}{
   author={Carleson, Lennart},
   title={Two remarks on \(H^{1}\) and BMO},
   journal={Advances in Math.},
   volume={22},
   date={1976},
   number={3},
   pages={269--277},
   issn={0001-8708},
   doi={10.1016/0001-8708(76)90095-5},
}

\bib{Chen_Zhu_2005}{article}{
    author={Chen, Jiecheng}*{inverted={yes}},
    author={Zhu, Xiangrong}*{inverted={yes}},
    title={A note on BMO and its application},
    journal={J. Math. Anal. Appl.},
    volume={303},
    date={2005},
    number={2},
    pages={696--698},
    issn={0022-247X},
    doi={10.1016/j.jmaa.2004.08.058},
}
		
\bib{Cohen_2000}{article}{
   author={Cohen, Albert},
   title={Ondelettes, espaces d'interpolation et applications},
   conference={
      title={S\'{e}minaire: \'{E}quations aux D\'{e}riv\'{e}es Partielles, 1999--2000},
   },
   book={
      series={S\'{e}min. \'{E}qu. D\'{e}riv. Partielles},
      publisher={\'{E}cole Polytech., Palaiseau},
   },
   date={2000},
   pages={Exp. No. I, 14},
}

\bib{Cohen_Dahmen_Daubechies_DeVore_2003}{article}{
    author={Cohen, Albert},
    author={Dahmen, Wolfgang},
    author={Daubechies, Ingrid},
    author={DeVore, Ronald},
    title={Harmonic analysis of the space BV},
    journal={Rev. Mat. Iberoamericana},
    volume={19},
    date={2003},
    number={1},
    pages={235--263},
    issn={0213-2230},
    doi={10.4171/RMI/345},
}
		
\bib{Cohen_Meyer_Oru_1998}{article}{
    author={Cohen, A.},
    author={Meyer, Y.},
    author={Oru, F.},
    title={Improved Sobolev embedding theorem},
    conference={
        title={S\'{e}minaire sur les \'{E}quations aux D\'{e}riv\'{e}es Partielles,
        1997--1998},
    },
    book={
        publisher={\'{E}cole Polytech., Palaiseau},
    },
    date={1998},
    pages={Exp. No. XVI, 16},
}

\bib{Dao}{article}{
    author = {Dao, Nguyen Anh}*{inverted={yes}},
    title = {Gagliardo--Nirenberg type inequalities using fractional Sobolev spaces and Besov spaces},
    eprint={arXiv:2212.05212},
}

\bib{Fefferman_Stein_1972}{article}{
    author={Fefferman, C.},
    author={Stein, E. M.},
    title={\(H^{p}\) spaces of several variables},
    journal={Acta Math.},
    volume={129},
    date={1972},
    number={3-4},
    pages={137--193},
    issn={0001-5962},
    doi={10.1007/BF02392215},
}

\bib{Fiorenza_Formica_Roskovec_Soudsky_2021}{article}{
    author={Fiorenza, Alberto}, 
    author={Formica, Maria Rosaria},
    author={Roskovec, Tomáš G.},
    author={Soudský, Filip},
    title={Detailed proof of classical Gagliardo–Nirenberg interpolation inequality with historical remarks}, 
    journal={Z. Anal. Anwend.},
    volume={40},
    date={2021},
    number={2}, 
    pages={217–236},
    doi={10.4171/ZAA/1681},
}

\bib{Gagliardo_1959}{article}{
    author={Gagliardo, Emilio},
    title={Ulteriori propriet\`a di alcune classi di funzioni in pi\`u variabili},
    journal={Ricerche Mat.},
    volume={8},
    date={1959},
    pages={24--51},
    issn={0035-5038},
}

\bib{Hajlasz_1996}{article}{
  author={Haj\l asz, Piotr},
  title={Sobolev spaces on an arbitrary metric space},
  journal={Potential Anal.},
  volume={5},
  date={1996},
  number={4},
  pages={403--415},
  issn={0926-2601},
  doi={10.1007/BF00275475},
}

\bib{Jabin_2010}{article}{
    author={Jabin, Pierre-Emmanuel},
    title={Differential equations with singular fields},
    journal={J. Math. Pures Appl. (9)},
    volume={94},
    date={2010},
    number={6},
    pages={597--621},
    issn={0021-7824},
    doi={10.1016/j.matpur.2010.07.001},
}

\bib{John_Nirenberg_1961}{article}{
    author={John, F.},
    author={Nirenberg, L.},
    title={On functions of bounded mean oscillation},
    journal={Comm. Pure Appl. Math.},
    volume={14},
    date={1961},
    pages={415--426},
    issn={0010-3640},
    doi={10.1002/cpa.3160140317},
}
		
\bib{Kalamajska_1994}{article}{
    author={Ka\l amajska, Agnieszka},
    title={Pointwise multiplicative inequalities and Nirenberg type estimates
    in weighted Sobolev spaces},
    journal={Studia Math.},
    volume={108},
    date={1994},
    number={3},
    pages={275--290},
    issn={0039-3223},
    doi={10.4064/sm-108-3-275-290},
}

\bib{Kozono_Wadade_2008}{article}{
    author={Kozono, Hideo}*{inverted={yes}},
    author={Wadade, Hidemitsu}*{inverted={yes}},
    title={Remarks on Gagliardo-Nirenberg type inequality with critical Sobolev space and BMO},
    journal={Math. Z.},
    volume={259},
    date={2008},
    number={4},
    pages={935--950},
    issn={0025-5874},
    doi={10.1007/s00209-007-0258-5},
}

\bib{Liu_1977}{article}{
    author={Liu, Fon Che}*{inverted={yes}},
    title={A Luzin type property of Sobolev functions},
    journal={Indiana Univ. Math. J.},
    volume={26},
    date={1977},
    number={4},
    pages={645--651},
    issn={0022-2518},
    doi={10.1512/iumj.1977.26.26051},
}	
\bib{Lokharu_2011}{article}{
   author={Lokharu, E. \`E.},
   title={The Gagliardo--Nirenberg inequality for maximal functions that measure smoothness},
   language={Russian},
   journal={Zap. Nauchn. Sem. S.-Peterburg. Otdel. Mat. Inst. Steklov. (POMI)},
   volume={389},
   date={2011},
   number={Issledovaniya po Line\u{\i}nym Operatoram i Teorii Funktsi\u{\i}. 38},
   pages={143--161, 287},
   issn={0373-2703},
   translation={
      journal={J. Math. Sci. (N.Y.)},
      volume={182},
      date={2012},
      number={5},
      pages={663--673},
      issn={1072-3374},
   },
   doi={10.1007/s10958-012-0771-x},
}

\bib{Mazya_Shaposhnikova_1999}{article}{
   author={Maz\cprime ya, Vladimir},
   author={Shaposhnikova, Tatyana},
   title={On pointwise interpolation inequalities for derivatives},
   journal={Math. Bohem.},
   volume={124},
   date={1999},
   number={2-3},
   pages={131--148},
   issn={0862-7959},
}

\bib{Meyer_Riviere_2003}{article}{
    author={Meyer, Yves},
    author={Rivi\`ere, Tristan},
    title={A partial regularity result for a class of stationary Yang--Mills fields in high dimension},
    journal={Rev. Mat. Iberoamericana},
    volume={19},
    date={2003},
    number={1},
    pages={195--219},
    issn={0213-2230},
    doi={10.4171/RMI/343},
}

\bib{Miyazaki_2020}{article}{
    author={Miyazaki, Yoichi}*{inverted={yes}},
    title={A short proof of the Gagliardo--Nirenberg inequality with BMO term},
    journal={Proc. Amer. Math. Soc.},
    volume={148},
    date={2020},
    number={10},
    pages={4257--4261},
    issn={0002-9939},
    doi={10.1090/proc/15048},
}

\bib{Nirenberg_1959}{article}{
    author={Nirenberg, L.},
    title={On elliptic partial differential equations},
    journal={Ann. Scuola Norm. Sup. Pisa Cl. Sci. (3)},
    volume={13},
    date={1959},
    pages={115--162},
    issn={0391-173X},
}

\bib{Stein_1970}{book}{
   author={Stein, Elias M.},
   title={Singular integrals and differentiability properties of functions},
   series={Princeton Mathematical Series, No. 30},
   publisher={Princeton University Press, Princeton, N.J.},
   date={1970},
   pages={xiv+290},
}

\bib{Strzelecki_2006}{article}{
    author={Strzelecki, P.},
    title={Gagliardo--Nirenberg inequalities with a BMO term},
    journal={Bull. London Math. Soc.},
    volume={38},
    date={2006},
    number={2},
    pages={294--300},
    issn={0024-6093},
    doi={10.1112/S0024609306018169},
}
		
\end{biblist}

\end{bibdiv}

\end{document}